%
%
%

\documentclass[graybox]{svmult}


\usepackage{type1cm}        
%
\usepackage{makeidx}         
\usepackage{graphicx}        
\usepackage{multicol}        
\usepackage[bottom]{footmisc}

\usepackage{newtxtext}       %
\usepackage{newtxmath}       

\usepackage{enumitem}


\def\bR{\mathbb{R}}
\def\bC{\mathbb{C}}
\def\bS{\mathbb{S}}
\def\bN{\mathbb{N}}
\def\bZ{\mathbb{Z}}

\def\cA{\mathcal{A}}
\def\cF{\mathcal{F}}
\def\cS{\mathcal{S}}

\def\cG{\mathcal{G}}
\def\cB{\mathcal{B}}

\def\cH{\mathcal{H}}
\def\cD{\mathcal{D}}

\def\vp{\varphi}

\def\rn{\bR^n}
\def\rno{\bR^n\setminus\{0\}}

\def\zn{\bZ^n}
\def\rnn{\bR^{2n}}
\def\rnno{\bR^{2n}\setminus\{0\}}
\def\znn{\bZ^{2n}}
\def\la{\langle}
\def\ra{\rangle}
\def\lc{\left(}
\def\rc{\right)}
\def\lV{\left\lVert}
\def\rV{\right\rVert}

\def\supp{\mathrm{supp}}
\def\wt{\widetilde}
\def\*b{*_{\bullet}}
\def\w{\mathrm{w}}

\newcommand{\SO}{S^0_{0,0}}
\newcommand{\Gz}{\Gamma_{z_0}}

\def\Bd'{B_{\delta'}}

\def\cBd'{\bar{B}_{\delta'}}

\newcommand{\GLL}{\mathrm{GL}\left(2n,\mathbb{R}\right)}

\def\Spdr{\mathrm{Sp}(n,\bR)}

\def\smo{\setminus\{0\}}
\newcommand{\Sm}{S^m_{0,0}}
\def\opt{\mathrm{Op}_\tau}
\def\cones{\mathrm{conesupp}}
\newcommand{\chr}{\mathrm{char}}
\newcommand{\s}{_{v_s}}
\newcommand{\mpqsn}{M^{p,q}\s(\rn)}

\newcommand{\vpt}{\widetilde{\varphi}}
\newcommand{\sumin}{\sum_{\lambda \in \Lambda\cap \Gamma_{z_0}}}
\newcommand{\sumo}{\sum_{\lambda \in \Lambda\setminus \Gamma_{z_0}}}

\makeindex             


\begin{document}

\title*{An introduction to the Gabor wave front set}
\author{Luigi Rodino and S. Ivan Trapasso}
\institute{Luigi Rodino \at Dipartimento di Matematica, Universit\`a di Torino, Via Carlo Alberto 10, 10123, Torino. \email{luigi.rodino@unito.it}
\and S. Ivan Trapasso \at Dipartimento di Scienze Matematiche (DISMA) ``G. L. Lagrange'', Politecnico di Torino, Corso Duca degli Abruzzi 24, 10129, Torino. \email {salvatore.trapasso@polito.it}}
%
%
\maketitle

\abstract{In this expository note we present an introduction to the Gabor wave front set. As is often the case, this tool in microlocal analysis has been introduced and reinvented in different forms which turn out to be equivalent or intimately related. We provide a short review of the history of this notion and then focus on some recent variations inspired by function spaces in time-frequency analysis. Old and new results are presented, together with a number of concrete examples and applications to the problem of propagation of singularities. }

\section{Introduction} A central notion in microlocal analysis of partial differential equations is the wave front set \cite{hormander book 1}. In somewhat rough terms, the wave front set of a distribution $u$ is the collection of all the points of the phase space $(x_0,\xi_0)$, $\xi_0 \ne 0$, where the lack of regularity of $u$ at $x_0$ is detected on the spectral side by a characteristic behaviour in the direction $\xi_0$. Giving a rigorous meaning to this heuristic model provides a fine scale of technical tools for the microlocal study of singularities of pseudodifferential operators and their propagation. It should be stressed that wave front sets play a major role in the mathematical theory of quantum fields \cite{duis,radz}. We cannot frame here the long tradition of studies on the wave front set and its applications; a complete historical and technical account may be found in the monograph \cite{hormander book 1} by H\"ormander, who first introduced wave front sets in \cite{hormander first}. 

In recent times the notion of wave front set have benefited from the perspective of time-frequency analysis \cite{johansson,pil tt1,pil tt2,rodino wahlberg}. The spirit of Gabor analysis may be condensed in the idea of simultaneous analysis of distributions with respect to both time and frequency variables; several techniques and function spaces were introduced in the last decades to carry out this program \cite{gro book}. The affinities with the notion of wave front set, where the regularity is measured by a simultaneous analysis of points and directions, are evident.  

The purpose of this introductory paper is to present some of the contributions in this respect, in particular we focus on the Gabor wave front set \cite{rodino wahlberg}. To be precise, the idea of a \textit{global} wave front set showed up many times under several different guises; an historical account on the issue with many pointers to the literature is given in Section 3, while in Section 2 we collected some preliminary material from microlocal and time-frequency analysis. 

In Section 4 we provide a more technical description of the Gabor wave front set. In particular we highlight the most important results of \cite{CNR prop}. \cite{hormander quadratic} and  \cite{rodino wahlberg}, together with a number of detailed examples. New results for the wave front set in context of modulation space regularity are derived in Section 4.3. We conclude  with a brief review of applications to propagation of singularities. 

Most of the technical proofs are omitted to keep the presentation at an introductory level. We hope that this overview may be useful as a point of departure for the interested reader, as well as a practical summation of the most relevant results on the topic.  

\section{Preliminaries} 
\subsection{Notation} We set $x^2=x\cdot x$, for $x\in\rn$, where $x\cdot y = xy$ is the scalar product on $\rn$. The Schwartz class is denoted by  $\cS(\rn)$, the space of temperate distributions by $\cS'(\rn)$. The brackets $\langle  f,g\rangle $ denote the extension to $\cS' (\rn)\times \cS (\rn)$ of the inner product $\langle f,g\rangle=\int_{\rn} f(x){\overline {g(x)}}dx$ on $L^2(\rn)$.

The conjugate exponent $p'$ of $p \in [1,\infty]$ is defined by $1/p+1/p'=1$. The symbol $\lesssim$ means that the underlying inequality holds up to a positive constant factor $C>0$.
For any $x\in\rn$ and $s\in\bR$
we set $\la x \ra^{s}\coloneqq(1+\left|x\right|^{2})^{s/2}$.
We choose the following normalization for the Fourier transform:
\[
\hat{f}(\xi) = \cF f(\xi) =\int_{\rn}e^{-2\pi ix\xi}f(x) d x,\qquad\xi\in \rn. \]
We define the translation and modulation operators: for
any $x,\xi \in\rn $ and $f\in\cS(\rn)$,
\[
\left(T_{x}f\right)\left(y\right)\coloneqq f(y-x),\qquad\left(M_{\xi}f\right)(y)\coloneqq e^{2\pi i \xi y}f(y).
\]
These operators can be extended by duality on temperate distributions. The composition $\pi(x,\xi)=M_\xi T_x$ constitutes a so-called \textit{time-frequency shift}.

Recall that $\Gamma \subset \rn$ is a \textit{conic subset} of $\rn$ if it is invariant under multiplication by positive real numbers, namely $x \in \Gamma \Rightarrow \lambda x \in \Gamma$ for any $\lambda >0$. 

The \textit{symplectic group} $\Spdr$ consists of all $2n\times 2n$ invertible matrices $S \in \GLL$ such that 
\[ S^\top J S = S J S^\top = J, \quad J= \lc \begin{array}{cc} O & I \\ -I & O \end{array}\rc, \] where $J$ is the canonical symplectic matrix and $O$ and $I$ are the $n\times n$ zero and identity matrices respectively. 

In the rest of the paper we identify the cotangent set $T^*\rn$ of $\rn$ with $\rnn$ to lighten the notation.

\subsection{Modulation spaces}\label{sec mod} The short-time Fourier transform (STFT) of a temperate distribution $u\in\cS'(\rn)$ with respect to the window function $\vp \in \cS(\rn)\setminus\{0\}$ is defined by
\begin{equation}\label{FTdef}
V_\vp u (x,\xi)\coloneqq \cF (u\cdot T_x \vp)(\xi)=\int_{\rn}e^{-2\pi iy  \xi } u(y)\, {\overline {\vp(y-x)}}\,dy.
\end{equation}

The reader may want to consult the monograph \cite{gro book} for a comprehensive treatment of the mathematical properties of this time-frequency representation, in particular those mentioned below. We highlight that the STFT is intimately connected with other well-known phase-space transforms, in particular the Wigner distribution
\begin{equation} W(u,\vp)(x,\xi )=\int_{\mathbb{R}^{d}}e^{-2\pi iy \xi }u\left(x+\frac{y}{2}\right)%
\overline{\vp\left(x-\frac{y}{2}\right)}\ dy. \label{def wig} \end{equation}

As far as the regularity is concerned, the STFT of a possibly wild distribution $u \in \cS'(\rn)$ is a well-behaved function; in particular, we have that $V_\vp u \in C(\rnn)$ and there exists constant $C>0$ and $N \ge0$ such that $|V_\vp u (z)| \le C \la z \ra ^N$ for all $z \in \rnn$. Furthermore, $V_\vp u \in \cS(\rnn) \Leftrightarrow u \in \cS(\rn)$. It turns out that the STFT is one-to-one in $\cS'(\rn)$, as a result of the following \textit{inversion formula}: for $u \in \cS'(\rn)$ and $\vp,\psi \in \cS(\rn)\smo$ such that $\la \vp,\psi \ra \ne 0$ we have
\begin{equation}\label{rec form cont}
u = \frac{1}{\la \vp,\psi \ra}  \int_{\rnn} V_\vp u (z) \pi(z)\psi dz, \end{equation} to be interpreted in the distribution sense - namely, the right-hand side is a temperate distribution whose action on $\phi \in \cS(\rn)$ coincides with $\la u,\phi\ra$. Notice in particular that if we choose $\vp \in \cS(\rn)\smo$ with $\lV \vp \rV_{L^2} = 1$ and set $\psi=\vp$ we have
\begin{equation}\label{stft rec cont}
|V_\vp u(w)| = \left| \int_{\rnn} V_\vp u(z) V_\vp \vp (w-z)dz\right|, \quad w \in \rnn. \end{equation}
This argument generalizes to the following pointwise inequality (``change-of-window lemma'') which will be used below. 
\begin{lemma}[{\cite[Lem. 11.3.3]{gro book}}]\label{change of w}
	Let $\vp_1,\vp_2,\phi \in \cS(\rn)$ be such that $\la \phi,\vp_1 \ra \ne 0$ and $u \in \cS'(\rn)$. Therefore 
	\[ |V_{\vp_2}u (x,\xi)| \le \frac{1}{|\la \phi,\vp_1 \ra|}(|V_{\vp_1}u|*|V_{\vp_2}\phi|)(x,\xi), \quad \forall (x,\xi) \in \rnn. \]
\end{lemma}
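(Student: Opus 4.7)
The plan is to combine the reconstruction formula \eqref{rec form cont} with the elementary fact that time-frequency shifts act on the STFT by unimodular phases and translations.

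First, since $\la \phi,\vp_1\ra\neq 0$ if and only if $\la \vp_1,\phi\ra\neq 0$, I would apply the inversion formula \eqref{rec form cont} with $\vp=\vp_1$ and $\psi=\phi$ to represent $u$ as
\[ u \;=\; \frac{1}{\la \vp_1,\phi\ra}\int_{\rnn} V_{\vp_1}u(z)\,\pi(z)\phi\,dz, \]
interpreted in $\cS'(\rn)$. Pairing both sides against $\pi(w)\vp_2$ and exchanging the pairing with the $z$-integral then yields
\[ V_{\vp_2}u(w) \;=\; \frac{1}{\la \vp_1,\phi\ra}\int_{\rnn} V_{\vp_1}u(z)\, V_{\vp_2}(\pi(z)\phi)(w)\,dz. \]

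Next, I would compute $V_{\vp_2}(\pi(z)\phi)(w)$ directly from \eqref{FTdef}. A one-line change of variables shows that $V_{\vp_2}(\pi(z)\phi)(w) = c(z,w)\, V_{\vp_2}\phi(w-z)$ for some unimodular $c(z,w)\in\bC$ (coming from the non-commutativity of translation and modulation), hence $|V_{\vp_2}(\pi(z)\phi)(w)| = |V_{\vp_2}\phi(w-z)|$. Substituting this into the identity above and taking absolute values under the integral produces exactly the claimed convolution bound with prefactor $1/|\la \phi,\vp_1\ra|$.

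The delicate point, and the step I would treat with the most care, is justifying the interchange of the distributional pairing with the $z$-integral, since \eqref{rec form cont} converges only weakly in $\cS'(\rn)$. The cleanest route is to test the whole identity against an arbitrary $g\in\cS(\rn)$: the resulting double integral in $z$ and $y$ is absolutely convergent because $V_{\vp_1}u$ has at most polynomial growth (as recalled before the lemma) while both $\pi(z)\phi$ and $\pi(w)\vp_2$ contribute Schwartz decay in the appropriate variables, so Fubini's theorem applies. An alternative approach is to first prove the identity for $u\in\cS(\rn)$, where all integrals are classical, and extend to $u\in\cS'(\rn)$ by weak-$*$ approximation together with the continuity of $V_{\vp_2}$ on $\cS'(\rn)$.
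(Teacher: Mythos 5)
Your argument is correct and is essentially the standard proof of this lemma (the one in Gr\"ochenig's book, which the paper cites without reproducing): apply the inversion formula \eqref{rec form cont} with the pair $(\vp_1,\phi)$, pair against $\pi(w)\vp_2$, and use the covariance identity $|V_{\vp_2}(\pi(z)\phi)(w)|=|V_{\vp_2}\phi(w-z)|$ together with $|\la\vp_1,\phi\ra|=|\la\phi,\vp_1\ra|$. Your handling of the interchange of pairing and integral is also the right one, since the weak interpretation of \eqref{rec form cont} means precisely that its action on $\pi(w)\vp_2\in\cS(\rn)$ is given by that absolutely convergent integral.
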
 

When speaking of weight functions below we refer to some positive function $m\in L^\infty_{\mathrm{loc}}(\rnn)$ such that $m(z+\zeta) \lesssim m(z)\la \zeta \ra^r$ for some $r\ge 0$ and any $z,\zeta \in \rnn$ - that is, $m$ is $\la \cdot \ra ^r$-moderate.

Given a non-zero window $\vp\in\cS(\rn)$, a weight function $m$ on $\rnn$ and $1\leq p,q\leq \infty$, the {\it modulation space} $M^{p,q}_m(\rn)$ consists of all temperate distributions $u\in\cS'(\rn)$ such that $V_\vp u\in L^{p,q}_m(\rnn)$ (mixed weighted Lebesgue space), that is: 
\[ \|u\|_{M^{p,q}_m}=\|V_\vp u\|_{L^{p,q}_s}=\left(\int_{\rn}
\left(\int_{\rn}|V_\vp u(x,\xi)|^pm(x,\xi)^p\,
dx\right)^{q/p} d\xi \right)^{1/q}  \, <\infty ,\]
with trivial modification if $p$ or $q$ is $\infty$. 
If $p=q$, we write $M^p$ instead of $M^{p,p}$, while for the unweighted case ($m=0$) we set $M_{0}^{p,q}\equiv M^{p,q}$. 

It can be proved that $M^{p,q}_m(\rn)$ is a Banach space whose definition does not depend on the choice of the window $\vp$ (in the sense that different windows yield equivalent norms). The standard weight used in the rest of the paper is $m(z) = v_s(z) = \la z \ra^s$ for some $s \in \bR$. 
We mention that many common function spaces are intimately related with modulation spaces: for instance, $M^2(\rn)$ coincides with the Hilbert space $L^2(\rn)$, while if $m(x,\xi)= \la \xi \ra^s$ for $s\ge0$ we have that $M^2_m(\rn)$ coincides with the usual $L^2$-based Sobolev space $H^s(\rn)$. Furthermore, the following characterizations hold for any $1\le p,q \le \infty$:
\begin{equation}\label{eq cS mod}
\cS(\rn) = \bigcap_{s\ge0} \mpqsn, \quad \cS'(\rn)= \bigcup_{s\ge0} M^{p,q}_{v_{-s}}(\rn). 
\end{equation}

Another perspective on modulation spaces is provided by inspecting the definition of the STFT $V_\vp u$: it may be thought of as a continuous expansion of the function $u$ with respect to the uncountable system $\{\pi(z)\vp : z=(x,\xi) \in \rnn \}$. Notice that $\pi(z)\vp$ is a wave packet highly concentrated near $z$ in phase space. For short, we have $V_\vp u(x,\xi) = \la u,\pi(x,\xi)\vp \ra$ in the sense of the (extension to the duality $\cS'-\cS$ of the) inner product on $L^2$. This perspective is further reinforced by the role of \textit{frame theory} and discrete time-frequency representations. Given a non-zero window function $\vp \in L^2(\rn)$ and a subset $\Lambda \subset \rnn$, we say that the collection of the time-frequency shifts of $\vp$ along $\Lambda$ is a \textit{Gabor system}, namely \[ \cG(\vp,\Lambda) = \{ \pi(z)\vp \, : \, z \in \Lambda\}.  \] For instance one may consider separable lattices such as \[ \Lambda = \alpha \bZ \times \beta \bZ = \{ (\alpha k,\beta n) \, : \, k,n \in \bN \}, \] for lattice parameters $\alpha,\beta >0$; we write $\cG(g,\alpha,\beta)$ for the corresponding Gabor system. 
Recall that a \textit{frame} for a Hilbert space $\cH$ is a sequence $\{x_j\}_{j\in J} \subset \cH$ such that for all $x \in \cH$ \[ A\lV x \rV_{\cH}^2 \le \sum_{j\in J} |\la x,x_j\ra|^2 \le B\lV x \rV_{\cH}^2, \] for some universal constants $A,B>0$ (frame bounds). Roughly speaking, the paradigm of frame theory consists in decomposing a vector $x$ along the frame, then studying the action of operators on such elementary pieces and finally reconstructing the image vector. The entire process is encoded by the \textit{frame operator}
\[ S \, : \, \cH \ni x \mapsto \sum_{j\in J}\la x,x_j\ra x_j \in \cH. \]
If a Gabor system $\cG(\vp,\Lambda)$ is a frame for $L^2(\rn)$ it is called \textit{Gabor frame}. Notice that the Gabor frame operator reads 
\[ Sf = \sum_{z\in \Lambda} V_gf(z)\pi(z)g, \] and is a positive, bounded invertible operator on $L^2(\rn)$.  
A remarkable result of frame theory is that a function can be reconstructed from its Gabor coefficients by means of the following discrete analogue of \eqref{rec form cont}: 
\begin{equation}\label{rec form disc}
u = \sum_{z\in \Lambda} V_{\vp}u(z)\pi(z)\vpt,  \end{equation} where $\vpt = S^{-1}\vp$ is the \textit{canonical dual window} and the sum is unconditionally convergent in $L^2$. Notice that $\vp \in \cS(\rn) \Rightarrow \vpt \in \cS(\rn)$ if $\cG(\vp,\Lambda)$ is a Gabor frame \cite{jans}.   

Moreover, the reconstruction formula \eqref{rec form disc} extends to $u \in M^{p,q}_m(\rn)$ for all $1\le p,q \le \infty$ and weight function $m$ on $\rnn$, with unconditional convergence in the modulation space norm if $1\le p,q < \infty$ (weak unconditional otherwise). In addition, an equivalent discrete norm for $M^{p,q}_m(\rn)$ is given by 
\[ \lV u \rV_{M^{p,q}_s} = \lc \sum_{n\in \zn} \lc \sum_{k \in \zn} |V_\vp u(\alpha k, \beta n)m(\alpha k, \beta n)|^p \rc^{q/p} \rc^{1/q}. \]

\subsection{Pseudodifferential operators} In the spirit of time-frequency analysis we define Weyl operators starting from the relation
\begin{equation}
\langle \sigma^{\mathrm{w}}f,g\rangle =\langle \sigma,W(g,f)\rangle ,\qquad\forall f,g\in\mathcal{S}(\mathbb{R}^{d}),\label{def wig dual}
\end{equation}
where $\sigma\in\mathcal{S}'(\mathbb{R}^{2d})$ is the \textit{symbol} of the \textit{Weyl operator} $\sigma^{\mathrm{w}}:\mathcal{S}(\mathbb{R}^{d})\rightarrow\mathcal{S}'(\mathbb{R}^{d})$, which can be formally represented as
\[
\sigma^{\text{w}}f\left(x\right) = \int_{\mathbb{R}^{2d}}e^{2\pi  i\left(x-y\right) \xi}\sigma\left(\frac{x+y}{2},\xi\right)f(y)dyd\xi,
\]
while $W(g,f)$ is the Wigner transform defined in \eqref{def wig}. Other quantization rules may be covered in a similar fashion. In particular, we define 
\begin{equation}
\langle \opt(\sigma)f,g\rangle =\langle \sigma,W_\tau(g,f)\rangle ,\qquad\forall f,g\in\mathcal{S}(\mathbb{R}^{d}),\label{def wig tau}
\end{equation}
where the Wigner distribution is generalized as
\[ W_\tau(f,g)(x,\xi )=\int_{\mathbb{R}^{d}}e^{-2\pi iy \xi }f(x+\tau y) \overline{g(x-(1-\tau)y)}\ dy. \] We refer to the papers \cite{bdo,CDT,CNT,CT} for results in this general framework. Notice that we recapture the Weyl quantization for $\tau=1/2$, while the case $\tau=0$ corresponds to Kohn-Nirenberg quantization. In the rest of the paper we will focus on Weyl operators, but most of the stated results can be transferred to other kind of pseudodifferential operators in view of the identity
\begin{equation}\label{transfer tau} \mathrm{Op}_{\tau_1}(a) = \mathrm{Op}_{\tau_2}(T_{\tau_1,\tau_2}a), \quad T_{\tau_1,\tau_2}a=e^{2\pi i (\tau_1 - \tau_2) D_x D_\xi}a, \quad a \in \cS'(\rnn). \end{equation} 

Nevertheless, there is a distinctive property characterizing the Weyl calculus among other quantization rules, which is known as \textit{symplectic covariance}. Recall indeed that $S \in \Spdr$ can be associated with a unitary bounded operator $\mu(S)$ on $L^2(\rn)$, called \textit{metaplectic operator}, which satisfies the intertwining property
\[ \mu(S)^{-1} \sigma^\w \mu(S) = (\sigma\circ S)^\w, \quad \sigma \in \cS'(\rnn). \] This shows that the map $\mu: S \mapsto \mu(S)$ defines a metaplectic operator only up to a constant complex factor of modulus one. We will not focus on technical details concerning the metaplectic representation; in fact, we have been quite sloppy in describing these features. The reader may consult \cite{dg symp, gro book,wong} for a precise account on symplectic covariance and metaplectic operators.  

A major advantage of the time-frequency analysis perspective on pseudodifferential operators is that general symbol classes may be considered, in particular modulation spaces. Recall the definition of the classical H\"ordmander classes \cite{hormander book 3}.
\begin{definition} Let $m \in \bR$. The symbol class $\Sm$ is the subspace of smooth functions $a \in C^{\infty}(\rnn)$ such that 
	\[ \sup_{(x,\xi) \in \rnn} \la \xi \ra^{-m} |\partial^\alpha_x \partial^\beta_\xi a(x,\xi)| < \infty, \quad \forall \alpha,\beta \in \bN^n_0.  \]
It is a Fr\'echet space with the obvious seminorms.
\end{definition}

For $a \in \Sm$ we have that $a^\w$ is continuous on $\cS(\rn)$ and $\cS'(\rn)$; moreover the map $T_{0,1/2}$ is an automorphism of $\Sm$. Composition of Weyl operators with symbols in $\Sm$ classes is well behaved: if $a \in \Sm$ and $b \in S^n_{0,0}$, then $a^\w \circ b^\w$ is again a Weyl operator with symbol $a\#b \in S^{m+n}_{0,0}$ - the latter is known as the \textit{Weyl (or twisted) product} of $a$ and $b$. While explicit formulas are known for $a\#b$ in general, we stress that the calculus associated with symbols in $\Sm$ is highly non-trivial due to the lack of asymptotic expansions for Weyl product of symbols. 

A somewhat better behaviour is showed by \textit{Shubin symbol classes} \cite{shubin}, defined as follows. 
\begin{definition}
	Let $m \in \bR$. The symbol class $G^m$ is the subspace of smooth functions $a \in C^{\infty}(\rnn)$ such that 
	\[ \sup_{z \in \rnn} \la z \ra^{-m+|\alpha|} |\partial^\alpha_z a(z)| < \infty, \quad \forall \alpha \in \bN^{2n}_0.  \]
	It is a Fr\'echet space with the obvious seminorms.
\end{definition}
We confine ourselves to recall that $\bigcap_{m\in \bR}G^m = \cS(\rnn)$ and the Weyl product is a bilinear continuous map $\# : G^m \times G^n \to G^{m+n}$. We also set $G^\infty = \bigcup_{m\in \bR}G^m$. 

\section{A short history of the Gabor wave front set} By analogy with the classical Huygens construction of a propagating wave, H\"ormander (\cite{hormander FIO}, 1971) called \textit{wave front set} of a distribution $u$ the subset $WF(u)$ of $\rn_x \times (\rn_\xi \setminus\{0\})$ defined by examining the behaviour at infinity of the Fourier transform $\hat{u}$. Namely, the point $(x_0,\xi_0)$, $\xi_0 \ne 0$, does \textit{not} belong to $WF(u)$ if there exist a function $\varphi \in C^\infty_c(\rn)$, $\varphi(x_0)\ne 0$, and a conic neighbourhood $\Gamma_{\xi_0}\subset \rn$ of $\xi_0$ such that
\begin{equation}
|\widehat{\varphi u}(\xi)| \le C_N \la \xi \ra^{-N} \quad \forall \xi \in \Gamma_{\xi_0}, \, N \in \bN,
\end{equation} for a suitable constant $C_N >0$. Here and below we assume $u \in \cS'(\rn)$, though the preceding estimate applies obviously to $u \in \cD'(\rn)$ or $u \in \cD'(\Omega)$ with $\Omega$ open subset of $\rn$ such that $x_0 \in \Omega$ and $\supp(\varphi) \subset \Omega$. 

An alternative definition can be given by using classical pseudodifferential operators with polyhomogeneous symbol with respect to the $\xi$ variables:
\begin{equation}
p(x,\xi) = p_m(x,\xi) + \ldots,
\end{equation} where $p_m$ satisfies $p_m(x,\lambda \xi) = \lambda^m p(x,\xi)$ for $\lambda >0$ and $\xi \ne 0$. Precisely, $(x_0,\xi_0) \notin WF(u)$ if and only if there exists $p(x,\xi)$ with $p_m(x_0,\xi_0) \ne 0$ such that $p(x,D)u \in C^\infty(U_{x_0})$ for some neighbourhood $U_{x_0}$ of $x_0$. The statement does not depend on the quantization rule we adopt to define $p(x,D)$. 

Afterwards, several variables of the definition of $WF(u)$ appeared. Our attention is focused here on the \textit{global wave front set} of H\"ormander (\cite{hormander quadratic}, 1989), which we denote here by $WF_G(u)$. To define $WF_G(u)$ for $u\in \cS'(\rn)$ we may imitate the preceding argument in terms of pseudodifferential operators, by taking now polyhomogeneous symbols in the $z=(x,\xi)$ variable, as in Shubin \cite{shubin}:
\begin{equation}
p(z)=p_m(z)+\ldots,
\end{equation} with $p_m(\lambda z) = \lambda^m p_m(z)$ for $\lambda >0$, and similarly for lower order terms. Then, $z_0=(x_0,\xi_0) \notin WF_G(u)$, $z_0 \notin 0$, if there exists $p(z)$ with $p_m(z_0)\ne 0$ such that $p(x,D)u \in \cS(\rn)$. Willing to give a direct definition, we may replace the Fourier transform with the integral transformation 
\begin{equation}\label{def Tu}
Tu(x,\xi)=\int_{\rn} e^{-2\pi it\xi}e^{-|t-x|^2/2}u(t)dt.
\end{equation} We have that $z_0 =(x_0,\xi_0) \notin WF_G(u)$ if and only if there exists a conic neighbourhood $\Gamma_{z_0}$ of $z_0$ in $\rnn$ such that 
\begin{equation}\label{Tu decay}
|Tu(z)|\le C_N \la z \ra^{-N}, \quad \forall z \in \Gamma_{z_0}, \, N \in \bN.
\end{equation}
In the next sections we shall review the main properties of $WF_G(u)$ and present some variants of the definition. We continue here by listing some papers of the last thirty years, where $WF_G(u)$ was reinvented, without reference to the original contribution by H\"ormander \cite{hormander quadratic}.

Let us first mention Nakamura (\cite{nakamura}, 2005), who introduced the so-called \textit{homogeneous wave front set} to study propagation of singularities for Schr\"odinger through methods typically used in semiclassical analysis. Schulz and Wahlberg (\cite{schulz wahlberg}, 2017) proved recently that the homogeneous wave front set coincides with $WF_G(u)$. In turn, Ito (\cite{ito}, 2006) clarified the connection of the homogeneous wave front set with the \textit{quadratic scattering wave front set} of Wunsch (\cite{wunsch}, 1999), see also \cite{mvw}. 

To complete this survey, we may mention the related definition of the \textit{scattering wave front set} of Melrose \cite{melrose}, Melrose and Zworski \cite{mz}, coinciding with the \textit{SG wave front set} of Cordes \cite{cordes} and Coriasco and Maniccia \cite{cor man}.

Roughly speaking, the scattering/SG wave front set consists of three components: $WF(u)$, $WF(\hat{u})$ and a third component similar to $WF_G(u)$ where analysis is limited to rays through $z_0=(x_0,\xi_0)$, with $x_0\in \bS^{n-1}_x$ and $\xi_0 \in \bS^{n-1}_\xi$. The enormous developments of the corresponding SG-microlocal analysis are somewhat outside our present perspective, see for instance \cite{cor k t} for references.

A new approach to $WF_G(u)$ was proposed by Rodino and Wahlberg (\cite{rodino wahlberg}, 2014) where the original contribution by H\"ormander \cite{hormander quadratic} was finally recognized and a further equivalent definition was given in terms of time-frequency analysis. Namely, the integral transform in \eqref{def Tu} coincides with the Bargmann-Gabor transform of $u$, that is a short-time Fourier transform with Gaussian window, see \cite{gabor} and the textbook \cite{gro book}. It is then natural to replace $Tu$ with the discrete Gabor frame representation of $u$, possibly with more general windows, and impose in the cone $\Gamma_{z_0}$ a rapid decay of the Gabor coefficients, see the next section for the details. In \cite{rodino wahlberg} the authors gave the name \textit{Gabor wave front set} to the associated wave front set and introduced the notation $WF_G(u)$, where the subscript $G$ stands both for global and Gabor. 

In these last five years, this new approach and the new name were adopted by a number of authors working in the area of time-frequency analysis. Let us try to give a short account. As already evident from the original work of H\"ormander \cite{hormander quadratic}, the main application concerns the propagation of microlocal singularities for the Schr\"odinger equation
\begin{equation}
\begin{cases} i\partial_t u(t,x) = H(x,D)u(t,x) \\ u(0,x) = u_0(x) \end{cases}. 
\end{equation}
A basic example is the quantum harmonic oscillator, corresponding to the Hamiltonian $H(x,\xi)=|x|^2 + |\xi|^2$. In fact, starting from the Gabor-Fourier integral representation of the Schr\"odinger propagator in \cite{CGNR,CNR spars} one can deduce in a natural way propagation in terms of $WF_G(u)$, see \cite{CNR survey,CNR prop, CNR wp,CN pot mod}. The analysis is extended to the case of non-self-adjoint Hamiltonians in \cite{car wahl,nicola semil, pravda rw, pravda mehler, wahlberg} and semilinear equations in \cite{nicola r}. In all these papers the definition of $WF_G(u)$ is modified by replacing the $\cS$-decay in \eqref{Tu decay} with other regularity conditions in order to best fit with the features of the Hamiltonian. In particular, in \cite{CNR survey,CNR prop,CN pot mod} the authors reconsider $WF_G(u)$ in the framework of weighted modulation spaces $M^p$ introduced by Feichtinger, see \cite{fei} and \cite{gro book}. In this connection we address to the next sections, where we shall present an alternative definition in terms of Gabor frames. 

In \cite{nicola r}, to study the non-linear properties of $WF_G(u)$, attention is addressed to $M^2=L^2$ regularity with weight $\la z \ra^s$, $z=(x,\xi)\in \rnn$, corresponding to the spaces $Q^s$ of Shubin \cite{shubin}. In \cite{schulz wahlberg} the authors consider the same variant of $WF_G(u)$, under the action of localization operators. In \cite{wahlberg} the \textit{polynomial Gabor wave front set} is defined assuming \eqref{Tu decay} satisfied for a fixed value of $N$. 

In \cite{cappiello sc,car wahl,CNR wp} the $\cS$-decay is replaced by \textit{analytic} and \textit{Gelfand-Shilov} decay. To be precise, $z_0=(x_0,\xi_0)$ does not belong to such wave front sets if there exists a conic neighbourhood $\Gamma_{z_0}$ of $z_0$ in $\rnn$ such that 
\begin{equation}
|Tu(z)| \le Ce^{-\epsilon \la z\ra^r}, \quad z \in \Gamma_{z_0},
\end{equation} for some fixed $r>0$ and positive constants $C$ and $\epsilon$. The case $r=1$ corresponds to the analytic Gabor wave front set. In \cite{boiti} the definition is generalized to ultradifferentiable classes by assuming
\begin{equation}
|Tu(z)| \le Ce^{-\omega(z)}, \quad z \in \Gamma_{z_0},
\end{equation} for a given weight function $\omega(z)$. 

Observe that in \cite{CNR survey} and \cite{CNR wp} the notion of $WF_G(u)$ is generalized to that of Gabor $\Psi$-filter, respectively in the analytic and modulation space setting. This allows one to get rid of the homogeneity assumption on the Hamiltonian. 

The research related to the Gabor wave front set, or other wave front sets from the point of view of time-frequency analysis, is very intensive at present and it is impossible to give complete references. Let us limit to further mention \cite{cappiello con,debr,kost,pil beyond,pil pran,pil toft}.

\section{Gabor wave front set: theory and practice} In this section we focus on the Gabor wave front set $WF_G$ introduced in the preceding historical account. 

\subsection{The global wave front set of H\"ormander} We briefly review the main properties of the global wave front set $WF(u)$ introduced by H\"ormander in \cite{hormander quadratic}. We need to introduce some preparatory notions. 

\begin{definition}
	The conic support of $a \in \cS'(\rnn)$ is the set $\cones(a)$ of all $z \in \rnno$ such that any open conic neighbourhood $\Gamma_z$ of $z$ in $\rnno$ satisfies: 
	\[ \overline{\supp(a) \cap \Gamma_z}\quad\text{is not compact in }\rnn. \]
\end{definition}

\begin{definition} Let $a \in G^m$ for some $m \in \bR$. We say that a point $z_0 \in \rnno$ is \textit{non-characteristic} for $a$ if there exist positive constants $A,\epsilon> 0$ and an open conic set $\Gamma \subset \rnno$ such that 
	\[ |a(z)| \ge \epsilon \la z \ra^m, \quad z \in \Gamma, \, |z| \ge A. \]
We define $\chr(a)$ as the subset of $\rnno$ containing all the non-characteristic points for $a$.
\end{definition}

Notice that \[ \cones(a) \cup \chr(a) = \rnno, \quad a \in G^m. \] We are now ready to define the global wave front set.

\begin{definition}
	Let $u \in \cS'(\rn)$. We say that a point $z_0 \in \rnno$ does not belong to $WF(u)$ if there exist $m\in \bR$ and $a \in G^m$ such that $a^\w u \in \cS(\rn)$ and $z_0 \notin \chr(a)$. 
\end{definition} 

We collect below some properties satisfied by $WF(u)$, following \cite{rodino wahlberg}.

\begin{proposition} Let $u \in \cS'(\rn)$.
	\begin{enumerate} 
		\item $WF(u)$ is a closed conic subset of $\rnno$. 
		\item $WF(u)$ is symplectically invariant: 
		\[ z_0 \in WF(u) \Rightarrow Sz_0 \in WF(\mu(S)u), \quad S \in \Spdr. \]
		\item For $a\in G^m$ the following inclusions hold:
		\[ WF(a^\w u) \subseteq WF(u) \cap \cones(a) \subseteq WF(u) \subseteq WF(a^\w u) \cup \chr(a). \] In particular, if $\chr(a) = \emptyset$ then $WF(a^\w u) = WF(u)$. 
		\item If $a \in G^m$ and $\cones(a)\cap WF(u) = \emptyset$ then $a^\w u \in \cS(\rn)$. 
		\item $WF(u) = \emptyset$ if and only if $u \in \cS(\rn)$. 
	\end{enumerate}
\end{proposition}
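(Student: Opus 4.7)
The plan is to prove the five items in the order $(1), (2), (3), (5), (4)$, so that each part can reuse the preceding ones.

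Items $(1)$ and $(2)$ are essentially formal. For $(1)$, the elliptic set $\chr(a)^{c}$ of any $a\in G^m$ is, by construction, an open conic subset of $\rnno$; hence the complement of $WF(u)$, being a union of such elliptic sets over admissible witnesses, is open and conic. For $(2)$, the symplectic covariance identity $\mu(S)^{-1}a^\w \mu(S) = (a\circ S)^\w$, together with the stability of $G^m$ under $a \mapsto a \circ S$ (using $\la Sz\ra \simeq \la z \ra$ for $S \in \Spdr$) and the continuity of $\mu(S)$ on $\cS(\rn)$, transports a witness from $z_0$ to $Sz_0$: if $a^\w u \in \cS(\rn)$ and $a$ is elliptic at $z_0$, then $(a \circ S^{-1})^\w \mu(S)u = \mu(S) a^\w u \in \cS(\rn)$, and $a \circ S^{-1}$ is elliptic at $Sz_0$.

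Item $(3)$ is the heart of the proposition and rests on Shubin's twisted-product calculus for the classes $G^m$. For $WF(a^\w u)\subseteq \cones(a)$, given $z_0 \notin \cones(a)$ I would pick a cutoff $\chi \in G^0$ elliptic at $z_0$ whose conic support sits inside a cone where $\supp(a)$ is bounded; then $\chi \# a \in \cS(\rnn)$, so $\chi^\w a^\w u = (\chi \# a)^\w u \in \cS(\rn)$, witnessing $z_0 \notin WF(a^\w u)$. For $WF(a^\w u)\subseteq WF(u)$, if $z_0 \notin WF(u)$ with $b$ elliptic at $z_0$ and $b^\w u \in \cS$, a microlocal parametrix of $b$ at $z_0$ yields $\psi \in G^0$ elliptic at $z_0$ with $\psi^\w u \in \cS$; then $a \# \psi$ is still elliptic at $z_0$ and $(a \# \psi)^\w u \in \cS$, so $z_0 \notin WF(a^\w u)$. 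Finally, for $WF(u)\subseteq WF(a^\w u) \cup \chr(a)$, fix $z_0$ outside the right-hand side: $a$ is elliptic at $z_0$ and some $b$ elliptic at $z_0$ gives $b^\w a^\w u = (b \# a)^\w u \in \cS$, with $b \# a$ elliptic at $z_0$ (its leading term is $ba$), so $z_0 \notin WF(u)$.

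For $(5)$, the easy direction uses the globally elliptic symbol $a(z) = \la z \ra^m \in G^m$: every point is elliptic for it, and $a^\w u \in \cS(\rn)$ whenever $u \in \cS(\rn)$, so no $z_0$ can lie in $WF(u)$. For the converse, a compactness argument on $\bS^{2n-1}$ produces finitely many witnesses $a_j \in G^{m_j}$ with $a_j^\w u \in \cS$ whose elliptic sets cover $\rnno$; after rescaling to a common order one assembles a globally elliptic $A \in G^{2M}$ with $A^\w u \in \cS$, and Shubin's parametrix theorem gives $B \in G^{-2M}$ with $B \# A = 1 + R$, $R \in \cS(\rnn)$, so that $u = B^\w A^\w u + R^\w u \in \cS(\rn)$. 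Item $(4)$ is then an immediate corollary of $(3)$ and $(5)$: $WF(a^\w u) \subseteq WF(u) \cap \cones(a) = \emptyset$, hence $a^\w u \in \cS(\rn)$.

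The main obstacle is the microlocal parametrix construction invoked in $(3)$ and $(5)$: inverting a globally elliptic $A$ modulo $\cS(\rnn)$ inside the Shubin classes requires iterating the asymptotic expansion of $B \# A$ in decreasing orders of $G^{-2M-k}$, summing via a Borel-type procedure, and checking that the remainder truly lies in $\cS(\rnn)$ rather than in some a priori larger intersection $\bigcap_m G^m$. Once this toolkit is available, the rest of the argument reduces to bookkeeping on open conic sets together with the symplectic-covariance step.
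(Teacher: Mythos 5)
The paper does not prove this proposition (it is quoted from the cited work of Rodino and Wahlberg), so your sketch must stand on its own. Its architecture --- openness of the elliptic sets for (1), symplectic covariance of the Weyl calculus for (2), the Shubin twisted-product calculus and a global parametrix for (3)--(5) --- is the standard route and is consistent with the cited source. Items (1), (2), (4), (5), as well as the inclusions $WF(a^\w u)\subseteq \cones(a)$ and $WF(u)\subseteq WF(a^\w u)\cup \chr(a)$ in (3), are correctly reduced to standard facts: disjoint conic supports force $\chi\#a\in\cS(\rnn)$; $b\#a-ba\in G^{k+m-2}$ so ellipticity survives composition; and the global elliptic parametrix modulo $\cS(\rnn)$ yields (5) (up to the sign slip $u=B^\w A^\w u - R^\w u$, and up to noting that (2) as you prove it gives the stated implication only after being applied to $S^{-1}$ and $\mu(S)u$).

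There is, however, a genuine gap in the middle inclusion $WF(a^\w u)\subseteq WF(u)$ of item (3). By the definition, to certify $z_0\notin WF(a^\w u)$ you must produce a symbol $c$, elliptic at $z_0$, with $c^\w (a^\w u)=(c\# a)^\w u\in\cS(\rn)$: the witness must act on the \emph{left} of $a$. Your candidate $a\#\psi$ fails on both counts. First, $(a\#\psi)^\w u=a^\w(\psi^\w u)$ is not of the form $c^\w(a^\w u)$, so even granting that it lies in $\cS(\rn)$ it only re-certifies $z_0\notin WF(u)$, which was the hypothesis. Second, the claim that $a\#\psi$ is elliptic at $z_0$ would require $a$ itself to be elliptic there, which is not assumed (and cannot be: $a$ may vanish near $z_0$). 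The correct argument needs the genuinely microlocal step you relegated to a side remark: with $\psi\in G^0$ elliptic on a conic neighbourhood $\Gamma$ of $z_0$ and $\psi^\w u\in\cS(\rn)$, pick $\chi\in G^0$ elliptic at $z_0$ with $\cones(\chi)$ compactly contained in $\Gamma$, and use the microlocal parametrix of $\psi$ on $\Gamma$ to factor $\chi\# a=c\#\psi+r$ with $c\in G^{m}$ and $r\in\cS(\rnn)$; then $\chi^\w a^\w u=c^\w\psi^\w u+r^\w u\in\cS(\rn)$. Note that the naive commutation estimate $\psi\# a-a\#\psi\in G^{m-2}$ is not by itself sufficient --- one must iterate it or invoke the division construction --- so this step cannot be dismissed as bookkeeping.
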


\subsection{The Gabor wave front set at Schwartz regularity} Let us give a concise review of Gabor wave front set in the context of Schwartz regularity, following \cite{rodino wahlberg}. First we introduce a continuous version of the Gabor wave front set characterized by rapid decay of the phase space representation of a distribution. 

\begin{definition} Let $u\in \cS'(\rn)$ and $\vp \in \cS(\rn)\smo$. We say that $z_0 \in \rnno$ does not belong to the set $WF'(u)$ if there exists an open conic neighbourhood $\Gamma_{z_0}$ of $z_0$ in $\rnno$ such that 
	\begin{equation}\label{def WF'} 
	\sup_{z \in \Gamma_{z_0}} \la z \ra^N |V_\vp u(z)| < \infty \quad \forall N \in \bN_0.
	\end{equation}
\end{definition}

It is a direct consequence of the definition that $WF'(u)$ is a closed conic subset of $\rnno$. The definition of $WF'(u)$ is well-posed in the sense that the Schwartz decay of $V_\vp u$ in a conic neighbourhood does not depend on the window function $\vp$, as detailed below. 

\begin{proposition}[{\cite[Cor. 3.3]{rodino wahlberg}}]\label{prop WFG}
	Let $u \in \cS'(\rn)$, $\vp \in \cS(\rn)\smo$ and $z_0 \in \rnno$. Assume that there exists an open conic neighbourhood $\Gamma_{z_0}$ of $z_0$ in $\rnno$ such that condition \eqref{def WF'} holds. For any open conic neighbourhood $\Gamma_{z_0}'$ of $z_0$ in $\rnno$ such that $\overline{\Gamma_{z_0}' \cap \bS^{2n-1}} \subseteq \Gamma_{z_0}$ and any $\psi \in \cS(\rn)\smo$ we have
	\[ \sup_{z \in \Gamma_{z_0}'} \la z \ra^N|V_\psi u(z)| < \infty \quad \forall N \in \bN_0. \]
\end{proposition}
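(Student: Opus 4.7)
The plan is to reduce everything to the change-of-window lemma (Lemma~\ref{change of w}), split the resulting convolution along $\Gamma_{z_0}$ and its complement, and exploit the angular separation between $\Gamma'_{z_0}$ and $\partial\Gamma_{z_0}$ to convert Schwartz decay of $V_\vp\phi$ into Schwartz decay of $V_\psi u$ along $\Gamma'_{z_0}$.

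First I would pick an auxiliary $\phi\in\cS(\rn)\smo$ with $\la\phi,\vp\ra\neq 0$ (for concreteness $\phi=\vp$), and apply the change-of-window lemma to obtain
\[
|V_\psi u(w)| \lesssim \int_{\rnn}|V_\vp u(z)|\,|V_\psi\phi(w-z)|\,dz = I_1(w)+I_2(w),
\]
where $I_1$ is the integral over $\Gamma_{z_0}$ and $I_2$ the integral over $\rnn\setminus\Gamma_{z_0}$. For $I_1$, the hypothesis gives $|V_\vp u(z)|\lesssim_M \la z\ra^{-M}$ on $\Gamma_{z_0}$ for every $M$. Combining this with Peetre's inequality $\la z\ra^{-M}\lesssim_M\la w-z\ra^{M}\la w\ra^{-M}$ and the fact that $V_\psi\phi\in\cS(\rnn)$ (so $\int\la\zeta\ra^{M}|V_\psi\phi(\zeta)|\,d\zeta<\infty$), I would conclude $I_1(w)\lesssim_M \la w\ra^{-M}$ for every $M\in\bN_0$, uniformly in $w\in\rnn$.

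For $I_2$, the geometric input is that the inclusion $\overline{\Gamma'_{z_0}\cap\bS^{2n-1}}\subseteq\Gamma_{z_0}$ guarantees an angular gap $\theta_0>0$ between $\Gamma'_{z_0}\cap\bS^{2n-1}$ and $\bS^{2n-1}\setminus\Gamma_{z_0}$. A direct computation with $|w-z|^2=|w|^2+|z|^2-2w\cdot z$ then gives $|w-z|^2\ge(1-\cos\theta_0)(|w|^2+|z|^2)$ for all $w\in\Gamma'_{z_0}$, $z\notin\Gamma_{z_0}$, hence $\la w-z\ra\gtrsim\la w\ra$ and $\la w-z\ra\gtrsim\la z\ra$ for $|w|\ge 1$. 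Using the standard polynomial growth bound $|V_\vp u(z)|\lesssim\la z\ra^{N}$ valid for some fixed $N$ (recalled in Section~\ref{sec mod}), and the Schwartz decay $|V_\psi\phi(w-z)|\lesssim_K\la w-z\ra^{-K}$, I would split $\la w-z\ra^{-K}\lesssim \la w\ra^{-K/2}\la z\ra^{-K/2}$ and choose $K=2\max(M,N+2n+1)$ to get
\[
I_2(w)\lesssim\la w\ra^{-M}\int_{\rnn}\la z\ra^{N-K/2}\,dz\;\lesssim_M\;\la w\ra^{-M}.
\]
Finally, continuity of $V_\psi u$ and its polynomial growth handle the bounded range of $w$ trivially.

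The only delicate ingredient is the angular separation estimate; once it is in place, everything else reduces to routine manipulations with Peetre's inequality and Schwartz decay. The main conceptual point to highlight is that the strictly smaller cone $\Gamma'_{z_0}$ is essential in the splitting argument: if we took $\Gamma'_{z_0}=\Gamma_{z_0}$, no angular gap between $\Gamma'_{z_0}$ and $\rnn\setminus\Gamma_{z_0}$ would exist and the bound on $I_2$ would break down.
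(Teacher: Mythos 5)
Your argument is correct and is essentially the approach the paper takes: while the proof of this particular proposition is omitted (it is cited from Rodino--Wahlberg), the paper proves the modulation-space analogue, Proposition~\ref{prop indep win WFGM}, by exactly the same scheme --- the change-of-window Lemma~\ref{change of w}, splitting the convolution over $\Gamma_{z_0}$ and its complement, and the angular separation estimate \eqref{est cone} --- of which your proof is the $L^\infty$/Schwartz-decay specialization. Your handling of $I_1$ via Peetre's inequality and of $I_2$ via the polynomial growth of $V_\vp u$ off the cone matches the paper's treatment of the corresponding two pieces, so there is nothing to correct.
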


In the spirit of time-frequency analysis it is interesting to study the discrete variant of $WF'(u)$ obtained by replacing the full phase-space cone $\Gamma_{z_0}$ in \eqref{def WF'} with its restriction to suitable lattice points. This leads to the definition of the Gabor wave front set $WF_G(u)$.

\begin{definition} Let $u\in \cS'(\rn)$, $\vp \in \cS(\rn)\smo$ and a separable lattice $\Lambda = \alpha\zn \times \beta\zn$ where $\alpha,\beta>0$ are such that $\cG(\vp,\Lambda)$ is a Gabor frame. We say that $z_0 \in \rnno$ does not belong to the Gabor wave front set $WF_G(u)$ if there exists an open conic neighbourhood $\Gamma_{z_0}$ of $z_0$ in $\rnno$ such that 
	\begin{equation}\label{def WFG} 
	\sup_{\lambda \in \Lambda \cap \Gamma_{z_0}} \la \lambda\ra^N|V_\vp u(z)| < \infty \quad \forall N \in \bN_0.
	\end{equation}
\end{definition}
	
While it is clear that $WF_G(u) \subseteq WF'(u)$, it is a remarkable result that the other inclusion holds too, cf. \cite[Thm. 3.5]{rodino wahlberg}, that is
\begin{equation} \boxed{WF_G(u) = WF'(u), \quad u \in \cS'(\rn).} \end{equation}
This characterization also shows that the definition of $WF_G(u)$ is independent of the choice of the Gabor frame $\cG(\vp,\Lambda)$ used in \eqref{def WFG}. Moreover, it can be proved that all these results still hold for more general lattices $\Lambda= \cA\znn$, where $\cA \in \GLL$. In the rest of the paper we will discard the notation $WF'(u)$ and we compute $WF_G(u)$ according to \eqref{def WF'} whenever convenient.  

Another important achievement in \cite{rodino wahlberg} is the proof of the fact that the Gabor wave front set coincides with H\"ormander's global wave front set. We prefer not to include a discussion of this issue in order to keep the presentation at an introductory level. We just mention that a key ingredient in the proof is a precise characterization of the Gabor wave front set of Weyl operators with symbols in $\Sm$ classes. 

\begin{proposition}\label{prop WFG weyl} Let $m \in \bR$. For $a \in \Sm$ we have
	\[ WF_G(a^\w u) \subseteq \cones(a), \quad u \in \cS'(\rn). \] In particular, for $m=0$ we have 
	\[ WF_G(a^\w u) \subseteq WF_G(u) \cap \cones(a), \quad u \in \cS'(\rn). \]
\end{proposition}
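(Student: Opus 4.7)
The strategy is to reduce matters to a kernel estimate for the phase-space matrix element of $a^\w$. Since $z_0 \notin \cones(a)$, choose an open conic neighborhood $\Gamma_{z_0}$ of $z_0$ with $\overline{\Gamma_{z_0}\cap \supp(a)}$ compact, and a smooth cutoff $\psi\in C^\infty(\rnn)$, positively homogeneous of degree zero for $|z|$ large, equal to $1$ on a smaller conic neighborhood $\Gamma'_{z_0}$ of $z_0$ and vanishing outside $\Gamma_{z_0}$. Splitting $a = \psi a + (1-\psi)a$, one sees that $\psi a \in C^\infty_c(\rnn)$, so $(\psi a)^\w$ has a Schwartz kernel and maps $\cS'(\rn)$ into $\cS(\rn)$; this piece contributes nothing to $WF_G$. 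It therefore suffices to prove the claim for $b := (1-\psi)a \in \Sm$, which vanishes identically on $\Gamma'_{z_0}$.

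Using the inversion formula (with $\|\vp\|_{L^2}=1$) I would write
\[
V_\vp(b^\w u)(w) = \int_{\rnn} K(w,z)\, V_\vp u(z)\, dz, \qquad K(w,z) = \la b,\, W(\pi(w)\vp,\pi(z)\vp)\ra.
\]
The symplectic covariance of the Wigner transform yields the identity $W(\pi(w)\vp,\pi(z)\vp)(\zeta) = e^{i\Phi(w,z,\zeta)}\,W(\vp,\vp)(\zeta - (w+z)/2)$, with a real phase $\Phi$ linear in $\zeta$ whose ``frequency'' is proportional to $J(w-z)$. Substituting $\eta = \zeta - (w+z)/2$ and performing repeated integration by parts in $\eta$ (exploiting $|\partial^\alpha b| \lesssim \la \cdot\ra^m$ and the Schwartz character of $W(\vp,\vp)$) produces, for every $N\in\bN$, the estimate
\[
|K(w,z)| \le C_N\, \la w-z\ra^{-N}\, \la (w+z)/2\ra^m,
\]
with additional rapid decay in $\la (w+z)/2\ra$ whenever $(w+z)/2$ lies in a yet smaller conic neighborhood $\Gamma''_{z_0}$ of $z_0$: the vanishing of $b$ on $\Gamma'_{z_0}$ then restricts the integrand to $|\eta|\gtrsim |(w+z)/2|$, where the Schwartz decay of $W(\vp,\vp)$ takes over.

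To conclude, fix $w$ in a conic neighborhood of $z_0$ strictly inside $\Gamma''_{z_0}$, and split the $z$-integral according to whether $(w+z)/2 \in \Gamma''_{z_0}$ or not. On the first region the kernel decays faster than any power of $\la w+z\ra$, which beats the polynomial bound $|V_\vp u(z)| \lesssim \la z\ra^{N_0}$ available since $u\in \cS'(\rn)$. On the second, elementary conic geometry gives $|w-z|\gtrsim |w|+|z|$ for $|w|$ large, so the factor $\la w-z\ra^{-N}$ dominates both the weight $\la (w+z)/2\ra^m$ and the polynomial growth of $V_\vp u$. Both pieces are $O(\la w\ra^{-M})$ for every $M$, so $z_0 \notin WF_G(a^\w u)$. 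For the refinement when $m=0$, the kernel obeys $|K(w,z)| \le C_N\la w-z\ra^{-N}$ without the midpoint weight; if additionally $z_0\notin WF_G(u)$ then $V_\vp u$ decays rapidly in a conic neighborhood of $z_0$, and rerunning the split (now placing the ``close'' region inside the cone of rapid decay of $V_\vp u$) yields the sharper inclusion $WF_G(a^\w u)\subseteq WF_G(u)$.

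The main obstacle I expect is extracting the precise kernel estimate: one must simultaneously exploit the $\Sm$-seminorms of $b$, its vanishing on $\Gamma'_{z_0}$, and the oscillatory Schwartz structure of $W(\vp,\vp)$, while keeping careful track of the three nested cones $\Gamma''_{z_0}\subset \Gamma'_{z_0}\subset \Gamma_{z_0}$ so as to justify the geometric split $|w-z|\gtrsim |w|+|z|$ in the last step.
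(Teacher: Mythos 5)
Your argument is correct and follows essentially the same route as the source the paper defers to for this result (\cite{rodino wahlberg}, the proof being omitted here): the cut-off reduction using $\cones(a)$, the phase-space kernel $K(w,z)=\la a^\w\pi(z)\vp,\pi(w)\vp\ra$ with the almost-diagonal estimate $|K(w,z)|\lesssim_N\la w-z\ra^{-N}$ obtained from the covariance of the Wigner transform and integration by parts, and the cone-separation splitting already used in Proposition \ref{prop indep win WFGM}. The only imprecision is cosmetic: for $a\in\Sm$ as defined in the paper the derivative bounds are $\la\xi\ra^{m}$ rather than $\la z\ra^{m}$, which only strengthens your kernel estimate and does not affect the argument.
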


We determine below the Gabor wave front set of some special distributions in order to get a taste of this notion and also to prepare material for applications to Schr\"odinger equations.

\begin{example} Fix $z_0 = (x_0,\xi_0) \in \rnn$. The Gabor wave front set is invariant under time-frequency shifts, namely 
	\[ WF_G(\pi(z_0) u ) = WF_G(u), \quad u \in \cS'(\rn). \] 
	This is indeed a consequence of the invertibility of time-frequency shifts and Proposition \ref{prop WFG weyl}, since 
	\[  \pi(z_0) = \sigma^\w, \quad \sigma(x,\xi) =e^{\pi i x_0\xi_0} e^{2 \pi i(x\xi_0 - \xi x_0)} \in \SO. \]
\end{example}

\begin{example}[Dirac delta]\label{ex delta}
	Consider the Dirac distribution centered at $x_0 \in \rn$, namely $\delta_{x_0} \in \cS'(\rn)$. In view of the previous example we can assume $x_0 = 0$ without loss of generality, namely $WF_G(\delta_{x_0}) = WF_G(\delta_0)$ for all $x_0 \in \rn$. Let us compute the STFT of $\delta_0$: for a fixed window $\vp \in \cS(\rn)\smo$, 
	\[ V_\vp \delta_0(x,\xi) = \la \delta_0, M_\xi T_x \vp \ra = \overline{\vp(-x)}.   \] This implies that $|V_\vp \delta_0 (0,\lambda\xi)| = |\vp(0)|$ for all $\lambda>0$ and $\xi \in \rn$. If we further assume $\vp(0) \ne 0$ we see that $\{0\} \times (\rno) \subseteq WF_G(\delta_0)$. 
	To conclude, let $C>0$ and consider the conic subset $\Gamma = \{ (x,\xi) \in \rnno \,:\, |\xi|<C|x| \}$. Let $z_0 = (x_0,\xi_0) \in \Gamma$; then
	\[ \sup_{z \in \Gamma}\, \la z \ra^N |V_\vp \delta_0 (z)| \lesssim  \sup_{x \in \rn} \, \la x \ra^N |\vp(-x)| < \infty, \] hence $z_0 \notin WF_G(\delta_0)$. This argument allows us to conclude that 
	\[ \boxed{WF_G(\delta_{x_0}) = WF_G(\delta_0) = \{0\} \times (\rno).} \]
	We remark that in the case of $\delta_{x_0}$ the Gabor wave front set is less informative than the classical H\"ormander wave front set \cite{hormander book 1} , which reads $WF_H(\delta_{x_0}) = \{x_0\} \times(\rno)$ and coincides with the SG wave front set $WF_\cS$ by Coriasco and Maniccia \cite{cor man}. 
\end{example}

\begin{example}[Pure frequency]\label{ex freq}
Fix $\xi_0 \in \rn$ and consider the distribution $u(t) = e^{2\pi i t\xi_0}$. In order to determine its Gabor wave front set we apply again the invariance property under phase-space shifts, namely \[ WF_G(u) = WF_G(M_{\xi_0} 1 ) = WF_G(1). \]
For a fixed window $\vp \in \cS(\rn)\smo$ we have
\[ V_\vp 1 (x,\xi) = \la 1,M_\xi T_x \vp\ra = \la \delta_0, T_\xi M_{-x} \hat{\vp} \ra = e^{-2\pi i x \xi} \hat{\vp}(-\xi), \] hence $|V_\vp 1 (\lambda x, 0)| = |\hat{\vp}(0)|$ for any $\lambda >0$ and $x \in \rn$. It is not restrictive to assume $\hat{vp}(0) \ne 0$, thus we conclude $(\rno) \times \{0\} \subseteq WF_G(1)$. The same arguments used in Example \ref{ex delta} yield
\[ \boxed{ WF_G\lc e^{2\pi i \xi_0 \cdot}\rc = WF_G(1) = (\rno)\times \{0\}.} \]
To compare with other wave front sets, notice that the classical wave front set is not able to detect any singularity since $u \in C^{\infty}(\rn)$, hence $WF_H(u) = \emptyset$. However, the SG wave front set is again more precise, yielding $WF_\cS(u)=(\rno)\times \{\xi_0\}$. 
\end{example}

\begin{example}[Fresnel chirp]\label{ex chirp}
	Fix $c \in \bR\smo$ and consider the linear chirp (also known as Fresnel function) $u(t)=e^{\pi i c t^2}$. Straightforward computation for the STFT of $u$ with Gaussian window $\vp(t) = e^{-\pi t^2}$ (cf. for instance \cite{ben unimod}) provide
	\begin{equation}\label{stft gauss} |V_\vp u(x,\xi)| = (1+c^2)^{-n/4}e^{-\pi|\xi-cx|^2/(1+c^2)}. \end{equation}
We deduce that the STFT rapidly decays in any open cone in $\rnno$ which does not include the hyperplane $\xi = cx$. Arguing as in the previous example we conclude that 
\[ \boxed{WF_G \lc e^{\pi i c |\cdot|^2}\rc = \{(x,cx) \, : \, x \in \rno \}.} \]
We stress that the Gabor wave front set is superior in detecting singularities than other notions in this case, which is characterized by varying frequency. Notice indeed that $WF_H(u) = \emptyset$, while $WF_\cS (u)= (\rno)\times(\rno)$.
\end{example}

\subsection{Modulation space setting}
	 
In Section \ref{sec mod} we introduced modulation spaces by conditioning the (weighted and mixed) Lebesgue regularity of the phase-space representation (STFT) of their members. This notion suggests a natural generalization of the Gabor wave front set $WF_G$ by relaxing the Schwartz decay in \eqref{def WF'} as follows, cf. \cite{CNR prop}.

\begin{definition}\label{def WFG mod cont}
	Let $1\le p \le \infty$, $s\ge0$, $\vp \in \cS(\rn)\smo$ and $u \in \cS'(\rn)$. We say that $z_0 \in \rnno$ does not belong to $WF_G^{p,s}(u)$ if there exists an open conic neighbourhood $\Gamma_{z_0}$ of $z_0$ in $\rnno$ such that $V_\vp u \in L^p\s(\Gamma_{z_0})$, that is
	\begin{equation}\label{def mod WFG} \int_{\Gamma_{z_0}} |V_\vp u(z)|^p \la z \ra ^{sp} dz < \infty, \end{equation}
	with obvious modification in the case where $p=\infty$. 
\end{definition}
It is clear from the definition that $WF_G^{p,s}(u)$ is a closed conic subset of $\rnno$. 

We remark that other kinds of microlocal analysis at modulation space regularity may be taken into account. In this respect we mention the wave front set $WF_{M^{p,q}_m}(u)$ introduced in \cite{pil tt1,pil tt2} and defined as follows. First define for $f\in \cS'(\rn)$ the set $\Sigma(f)$ as the complement in $\rno$ of the subset which contains all $\bar{\xi} \in \rno$ such that 
\[ \lc \int_{\Gamma_{\bar{\xi}}} \lc \int_{\rn} \left| V_\vp f (x,\xi) \right|^p m(x,\xi)^p dx \rc^{q/p} d\xi \rc^{1/q} < \infty, \] for some conic neighbourhood $\Gamma_{\bar{\xi}}$ of $\bar{\xi}$ in $\rno$. Hence, for $1\le p,q \le \infty$, a weight function $m$ on $\rnn$ and $u \in \cD'(\Omega)$, $\Omega \subseteq \rn$ open, $WF_{M^{p,q}_m}(u)$ consists of elements $(x_0,\xi_0) \in \Omega \times \rno$ such that $\xi_0 \in \Sigma(\phi u)$ for any $\phi \in C^{\infty}_c(\Omega)$ with $\phi(x_0) \ne 0$. It is a remarkable result that modulation spaces are microlocally equivalent to Fourier-Lebesgue spaces, in the sense of \cite[Thm. 6.1]{pil tt1}. We also refer to \cite{johansson} for a discrete version of this analysis. 

We prove below the independence of the window $\vp$ in the definition of $WF_G^{p,s}$, cf. \cite{CNR prop} for more general results.  
\begin{proposition}\label{prop indep win WFGM}
	Let $1 \le p \le \infty$, $s\ge0$, $u \in \cS'(\rn)$, $\vp \in \cS(\rn)\smo$ and $z_0 \in \rnno$. Assume that there exists an open conic neighbourhood $\Gamma_{z_0}$ of $z_0$ in $\rnno$ such that condition \eqref{def mod WFG} holds. For any open conic neighbourhood $\Gamma_{z_0}'$ of $z_0$ in $\rnno$ such that $\overline{\Gamma_{z_0}' \cap \bS^{2n-1}} \subseteq \Gamma_{z_0}$ and any $\psi \in \cS(\rn)\smo$ we have
	\begin{equation}\label{change wind WFGM} \int_{\Gamma_{z_0}'} |V_\psi u(z)|^p \la z \ra ^{sp} dz < \infty. \end{equation}
\end{proposition}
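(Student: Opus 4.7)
The strategy is the standard change-of-window trick combined with a cone-separation estimate, adapted to accommodate the weight $v_s$ and the modulation-space integrability.

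First, I would pick an auxiliary window $\phi \in \cS(\rn)$ with $\la \phi,\vp\ra \ne 0$ and apply Lemma \ref{change of w} to get the pointwise bound
\[ |V_\psi u(w)| \;\le\; \frac{1}{|\la \phi,\vp\ra|}\,(|V_\vp u|\ast |V_\psi\phi|)(w), \qquad w\in \rnn. \]
Split $V_\vp u = F_1+F_2$ with $F_1 = V_\vp u \cdot \chi_{\Gamma_{z_0}}$ and $F_2 = V_\vp u \cdot \chi_{\rnn\setminus \Gamma_{z_0}}$, and correspondingly bound $|V_\psi u| \lesssim |F_1|\ast|V_\psi\phi| + |F_2|\ast|V_\psi\phi|$. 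The goal reduces to showing each convolution lies in $L^p_{v_s}(\Gamma_{z_0}')$.

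For the $F_1$-term I would use that $v_s$ is submultiplicative (for $s\ge 0$ one has $\la w\ra^s \le 2^{s/2}\la z\ra^s\la w-z\ra^s$), so
\[ v_s(w)\,(|F_1|\ast|V_\psi\phi|)(w) \;\le\; C\,\bigl((|F_1|v_s)\ast(|V_\psi\phi|v_s)\bigr)(w). \]
By hypothesis $|F_1|v_s \in L^p(\rnn)$, and $|V_\psi\phi|v_s \in L^1(\rnn)$ because $V_\psi\phi \in \cS(\rnn)$; Young's inequality then gives the $L^p$ bound on all of $\rnn$, in particular on $\Gamma_{z_0}'$.

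For the $F_2$-term the key is the geometric condition $\overline{\Gamma_{z_0}'\cap \bS^{2n-1}}\subseteq \Gamma_{z_0}$, which yields a positive angular gap $\delta$ between $\Gamma_{z_0}'$ and $\rnn\setminus\Gamma_{z_0}$ on the unit sphere. A standard law-of-cosines argument then produces a constant $c>0$ such that
\[ |w-z| \;\ge\; c\,(|w|+|z|) \qquad \text{for } w\in \Gamma_{z_0}',\; z\in \rnn\setminus\Gamma_{z_0}, \]
uniformly for $|w|,|z|\ge 1$, with trivial modifications for small $|z|$ absorbed into constants via $\la\cdot\ra$ instead of $|\cdot|$. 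Since $V_\psi\phi$ is Schwartz and $V_\vp u$ has polynomial growth (say $|V_\vp u(z)|\lesssim \la z\ra^M$), taking $N$ large we get $|V_\psi\phi(w-z)|\lesssim \la w-z\ra^{-N}\lesssim \la w\ra^{-N/2}\la z\ra^{-N/2}$, which for $N$ large enough integrates to give the pointwise bound
\[ (|F_2|\ast|V_\psi\phi|)(w)\;\lesssim\; \la w\ra^{-K} \quad \text{for every } K>0,\; w\in \Gamma_{z_0}', \]
so this term lies in $L^p_{v_s}(\Gamma_{z_0}')$ trivially.

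I expect the cone-separation estimate to be the only real obstacle: verifying the linear lower bound $|w-z|\gtrsim |w|+|z|$ off the diagonal requires a short geometric argument, and one has to be careful to treat the region where $|w|$ or $|z|$ is small separately. The $p=\infty$ case requires only a cosmetic change (sup in place of integral in the Young step). Independence of $\phi$ is then automatic since both $\psi$ and $\vp$ play symmetric roles after the change-of-window inequality.
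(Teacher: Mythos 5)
Your proposal is correct and follows essentially the same route as the paper's proof: change-of-window lemma, decomposition of $V_\vp u$ according to the cone $\Gamma_{z_0}$ versus its complement, Young's inequality with the submultiplicative weight for the inner piece, and the angular-gap estimate $\la z-w\ra \gtrsim \max\{\la z\ra,\la w\ra\}$ combined with the polynomial growth of $V_\vp u$ for the outer piece. The only cosmetic differences are your introduction of an auxiliary window $\phi$ (the paper takes $\phi=\vp$ directly) and your slightly stronger conclusion of rapid decay for the off-cone term, neither of which changes the substance.
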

\begin{proof} Let us first recall the change-of-window estimate in Lemma \ref{change of w}, namely 
	\[ |V_{\psi}u(z)| \lesssim (|V_{\vp}u| * |V_{\psi}\vp|)(z), \quad z\in \rnn. \] Since $V_{\psi}\vp \in \cS(\rnn)$ for $\psi,\vp \in \cS(\rn)$, for any $N\ge0$ we have
	\[ |V_{\psi}u(z)| \lesssim \int_{\rnn} \la z-w \ra^{-N}|V_{\vp}u(w)| dw. \] 
	Therefore, to prove the desired estimate \eqref{change wind WFGM} it is enough to show that, for a suitable choice of $n\ge 0$ we have 
	\[ \lV \int_{\rnn} F(\cdot,w)dw \rV_{L^p(\Gamma_{z_0}')} < \infty, \] where we set $F(z,w) = F_n(z,w) = \la z \ra^s \la z-w \ra^{-N}|V_{\vp}u(w)|$. \\	
	We conveniently split the domain of integration in $\int_{\rnn} F(\cdot,w)dw$ in two parts, namely $\Gamma_{z_0}$ and $\rnn \setminus \Gamma_{z_0}$. Let us first consider $\rnn \setminus \Gamma_{z_0}$ and notice that \begin{equation}\label{est cone} \la z-w \ra \gtrsim \max\{ \la z\ra, \la w\ra \}, \quad z \in \Gamma_{z_0}', \, w \in \rnn \setminus \Gamma_{z_0}. \end{equation} Furthermore, in view of the characterization of $\cS'(\rn)$ in \eqref{eq cS mod} we deduce that $u \in M^p_{v_{-r}}(\rn)$ for some $r\ge 0$. Therefore, for $z \in \Gamma_{z_0}'$ we may write
\begin{align*} \int_{\rnn\setminus \Gamma_{z_0}} F(z,w)dw & \le \int_{\rnn\setminus \Gamma_{z_0}} \la z \ra^s\la w\ra^r \la z-w \ra^{-N} \frac{|V_{\vp}u(w)|}{\la w \ra^r}dw \\ & \lesssim \lc \la \cdot \ra^{r+s-N} * \frac{|V_{\vp}u(\cdot)|}{\la \cdot \ra^r} \rc(z). 
\end{align*} It is then enough to assume $N > r+s+2n$ to conclude
\[ \lV \int_{\rnn\setminus \Gamma_{z_0}} F(\cdot,w)dw \rV_{L^p(\Gamma_{z_0}')} \lesssim \lV \la \cdot \ra^{r+s-N} \rV_{L^1(\rnn)} \lV u \rV_{M^p_{v_{-r}(\rn)}} < \infty.  \]
For the remaining part we have
\begin{align*} \int_{\Gamma_{z_0}} F(z,w)dw & \le \int_{ \Gamma_{z_0}} \la z \ra^s\la w\ra^{-s} \la z-w \ra^{-s} \la z-w \ra^{s-N} |V_{\vp}u(w)\la w \ra^sdw \\ & \lesssim \int_{ \Gamma_{z_0}} \la z-w \ra^{s-N} |V_{\vp}u(w)\la w \ra^sdw \\ & \lesssim \lc \la \cdot \ra^{s-N} * \lc 1_{\Gamma_{z_0}}(\cdot)|V_{\vp}u(\cdot)|\la \cdot \ra^s \rc \rc(z),
\end{align*} where $1_{\Gamma_{z_0}}$ is the characteristic function of the set $\Gamma_{z_0}$. Assumption \eqref{def mod WFG} finally yields
\[ \lV \int_{\Gamma_{z_0}} F(\cdot,w)dw \rV_{L^p(\Gamma_{z_0}')} \lesssim \lV \la \cdot \ra^{s-N} \rV_{L^1(\rnn)} \lV V_{\vp}u \rV_{L^p_{v_s}(\Gamma_{z_0})} < \infty. \]
\end{proof}


In complete analogy with the Gabor wave front set $WF_G$ introduced in Definition \ref{def WFG} we consider a discrete version of $WF_G^{p,s}$. 

\begin{definition}\label{def WFG mod disc}
	Let $1\le p \le \infty$, $s\ge0$, $\vp \in \cS(\rn)\smo$ and $u \in \cS'(\rn)$. Consider a separable lattice $\Lambda = \alpha \zn \times \beta \zn$ where $\alpha,\beta>0$ are such that $\cG(\vp,\Lambda)$ is a Gabor frame. We say that $z_0 \in \rnno$ does not belong to $\wt{WF_G^{p,s}}(u)$ if there exists an open conic neighbourhood $\Gamma_{z_0}$ of $z_0$ in $\rnno$ such that $V_\vp u \in L^p\s(\Gamma_{z_0})$, that is
	 \begin{equation}\label{def mod WFG disc} \sum_{\lambda \in \Lambda \cap \Gamma_{z_0}} |V_\vp u(\lambda)|^p \la \lambda \ra ^{sp} < \infty, \end{equation}
	  with obvious modification in the case where $p=\infty$. 
  \end{definition}

We show that the discrete and continuous modulation Gabor wave front set coincide. Therefore, modulation space regularity in a conic neighbourhood of a phase space direction is a condition as strong as modulation space regularity restricted to the points of the same cone which belong to a suitable lattice. 
  
\begin{theorem}
	Let $1\le p \le \infty$, $s\ge0$ and $u \in \cS'(\rn)$. Then $WF_G^{p,s}(u) = \wt{WF_G^{p,s}}(u)$. 
\end{theorem}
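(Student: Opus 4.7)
The plan is to prove both inclusions, mirroring the template of Proposition \ref{prop indep win WFGM} but exchanging continuous integrals and lattice sums where appropriate. Fix a cone $\Gamma'_{z_0}$ with $\overline{\Gamma'_{z_0} \cap \bS^{2n-1}} \subseteq \Gamma_{z_0}$, and use the a priori containment $u \in M^p_{v_{-r}}(\rn)$ for some $r \ge 0$ granted by \eqref{eq cS mod} to control the contributions coming from outside $\Gamma_{z_0}$. Both directions rely on the same two ingredients: the change-of-window estimate of Lemma \ref{change of w} and a pair of discrete-continuous Young-type inequalities, which in essential form read: for any kernel $k$ with Schwartz decay, any $1 \le p \le \infty$, any $g \in L^p(\rnn)$ and any $(a_\lambda)_{\lambda \in \Lambda} \in \ell^p(\Lambda)$,
\[
\sum_{\lambda \in \Lambda} |(k*g)(\lambda)|^p \lesssim \|g\|_{L^p}^p, \qquad \left\|\sum_{\lambda \in \Lambda} a_\lambda k(\cdot - \lambda) \right\|_{L^p}^p \lesssim \sum_{\lambda \in \Lambda} |a_\lambda|^p,
\]
both provable by H\"older's inequality combined with the observation that $\sum_{\lambda \in \Lambda} |k(z - \lambda)|$ is uniformly bounded in $z$.

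For the inclusion $WF_G^{p,s}(u) \subseteq \wt{WF_G^{p,s}}(u)$ (continuous controls discrete), Lemma \ref{change of w} with $\vp_1 = \vp_2 = \phi = \vp$ gives $|V_\vp u(\lambda)| \lesssim \int_{\rnn} \la \lambda - w\ra^{-N} |V_\vp u(w)|\, dw$ for any $N \ge 0$, since $V_\vp \vp \in \cS(\rnn)$. Splitting the integral over $\Gamma_{z_0}$ and $\rnn \setminus \Gamma_{z_0}$ and multiplying through by $\la \lambda\ra^s$, I would use the weight inequality $\la \lambda \ra^s \lesssim \la w\ra^s \la \lambda - w\ra^s$ on the first piece and the cone bound \eqref{est cone} on the second, thereby dominating $|V_\vp u(\lambda)|\la \lambda\ra^s$ by convolutions of the form $(\la \cdot \ra^{-M} * g)(\lambda)$ with either $g = 1_{\Gamma_{z_0}}\, |V_\vp u|\, v_s \in L^p(\rnn)$ (first piece, by hypothesis) or $g = |V_\vp u|\, v_{-r} \in L^p(\rnn)$ (second piece, by the a priori regularity). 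Taking $M$ large enough, the first half of the displayed inequality yields the required $\ell^p$-summability on $\Lambda \cap \Gamma'_{z_0}$.

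For the converse inclusion $\wt{WF_G^{p,s}}(u) \subseteq WF_G^{p,s}(u)$, the Gabor frame reconstruction \eqref{rec form disc} combined with a change of window provides the dual estimate $|V_\vp u(z)| \lesssim \sum_{\lambda \in \Lambda} |V_\vp u(\lambda)|\, \la z - \lambda\ra^{-N}$, where we used $\vpt \in \cS(\rn)$. Splitting the sum over $\Lambda \cap \Gamma_{z_0}$ and $\Lambda \setminus \Gamma_{z_0}$, the same pair of weight and cone estimates reduces matters, via the second half of the displayed inequality, to the discrete $\ell^p$-control provided respectively by the hypothesis and by the discrete norm characterization of $M^p_{v_{-r}}(\rn)$. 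The only non-routine point is the bookkeeping of the weights alongside the cone estimate: the kernel exponent $N$ must be taken large enough (roughly $N > r + s + 2n$) so that the residual powers of $\la \cdot \ra$ leave a decaying kernel lying in $L^1(\rnn) \cap \ell^1(\Lambda)$. Apart from this, the argument is a direct discrete-continuous recasting of the proof of Proposition \ref{prop indep win WFGM}.
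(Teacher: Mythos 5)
Your proposal is correct and follows essentially the same route as the paper: both directions rest on the continuous and discrete reconstruction formulas, the splitting of the convolution/sum into the cone $\Gamma_{z_0}$ and its complement, the weight submultiplicativity together with the cone separation estimate \eqref{est cone}, the a priori bound $u \in M^p_{v_{-r}}(\rn)$ from \eqref{eq cS mod}, and a mixed discrete--continuous Young/H\"older argument (which the paper carries out by hand via H\"older and Minkowski with the uniform bound on $\sum_{\lambda}|k(z-\lambda)|$). The only cosmetic difference is that you phrase the estimates as pointwise dominations of $|V_\vp u|$ while the paper decomposes $u=u_1+u_2$ first, and you are slightly more careful in passing to the shrunken cone $\Gamma'_{z_0}$ where \eqref{est cone} actually applies.
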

\begin{proof} We give the proof only in the case where $p<\infty$, since the case $p=\infty$ requires trivial modification. We first prove that $z_0 \notin \wt{WF_G^{p,s}}(u) WF_G^{p,s}(u)$, namely that \eqref{def WFG mod disc} implies \eqref{def WFG mod cont}. In view of the reconstruction formula \eqref{rec form disc} we write $u = u_1 + u_2$, where 
	\[ u_1 = \sumin V_\vp u (\lambda) \pi(\lambda)\vpt, \qquad u_2 =\sumo V_\vp u (\lambda) \pi(\lambda)\vpt, \] where $\vpt = S^{-1}\vp \in \cS(\rn)\smo$ is the canonical dual window. 
 	It is therefore enough to show that $V_\vp u_1,V_\vp u_2 \in L^p_{v_s}(\Gamma_{z_0})$. Let us start with $V_\vp u_1$. 
 	\begin{align*}
 	\lV V_\vp u_1 \rV^p_{L^p_{v_s}(\Gamma_{z_0})} & = \int_{\Gz} \left| V_\vp u_1 \right|^p \la z \ra^{ps} dz \\ & \le \int_{\Gz} \sumin \lc \left| V_\vp u(\lambda) \right| \left| V_{\vpt}\vp(z-\lambda) \right| \la z \ra^{s}\rc^p dz.
 	\end{align*}
 We use the subadditivity of the weight, namely the identity $\la z \ra^s \le \la z-\lambda \ra^{-s} \la \lambda \ra^s$ to get
 \begin{align*}
 \lV V_\vp u_1 \rV^p_{L^p_{v_s}(\Gamma_{z_0})} & \le \int_{\Gz} \sumin \lc \left| V_\vp u(\lambda) \right| \la \lambda \ra^s \left| V_{\vpt}\vp(z-\lambda) \right| \la z-\lambda \ra^{-s}\rc^p dz.
 \end{align*}
Let us set $f(\lambda) = \left| V_\vp u(\lambda) \right| \la \lambda \ra^s$ and $g(z-\lambda) = \left| V_{\vpt}\vp(z-\lambda) \right| \la z-\lambda \ra^{-s}$ for the sake of clarity. Notice that $g(z- \lambda) \lesssim \la z-\lambda \ra^{-N-s}$ for arbitrary $N \ge 0$. Hence, by H\"older inequality we have
 \begin{align*}
\lV V_\vp u_1 \rV^p_{L^p_{v_s}(\Gamma_{z_0})} & \le \int_{\Gz} \sumin \lc f(\lambda)g(z-\lambda)\rc^p dz \\
& = \int_{\Gz} \sumin \lc f(\lambda)g(z-\lambda)^{1/p}g(z-\lambda)^{1-1/p}\rc^p dz \\
& \le \int_{\Gz} \lc \sumin f(\lambda)^p g(z-\lambda)\rc \lc \sumin g(z-\lambda)\rc^{p/p'} dz \\ 
& \le C \int_{\Gz} \sumin f(\lambda)^p g(z-\lambda)dz,
\end{align*}
where\[ C = \sup_{z \in \rnn} \lV g(z-\cdot) \rV_{\ell^1}^{p/p'} < \infty. \] We conclude by Minkowski inequality:
 \begin{align*}
\lV V_\vp u_1 \rV^p_{L^p_{v_s}(\Gamma_{z_0})} & \le C \int_{\Gz} \sumin f(\lambda)^p g(z-\lambda)dz \\
& \le C \sumin f(\lambda)^p \int_{\Gz} g(z-\lambda) dz \\
& \le C' \sumin f(\lambda)^p < \infty,
\end{align*}
where we set 
\[ C' = C \int_{\Gz} g(z - \lambda)dz < \infty, \] and used the assumption \eqref{def WFG mod disc} in the last step. 

It remains to prove that $V_\vp u_2 \in L^p_{v_s}(\Gz)$, namely 
 \begin{align*}
\lV V_\vp u_2 \rV^p_{L^p_{v_s}(\Gamma_{z_0})} & = \int_{\Gz} \left| V_\vp u_2 \right|^p \la z \ra^{ps} dz \\ & \le \int_{\Gz} \sumo \lc \left| V_\vp u(\lambda) \right| \left| V_{\vpt}\vp(z-\lambda) \right| \la z \ra^{s}\rc^p dz.
\end{align*}
Recall from Section \ref{sec mod} that the STFT has at most polynomial growth, that is $|V_\vp u(\lambda)| \lesssim \la \lambda \ra^r$ for some $r \ge 0$. Moreover, since $V_{\vpt}\vp \in \cS(\rnn)$ we have $|V_{\vpt}\vp(z-\lambda)|\lesssim \la z-\lambda\ra^{-N}$ for any $N \ge 0$. As a consequence of \eqref{est cone} we have 
\begin{align*}
\lV V_\vp u_2 \rV^p_{L^p_{v_s}(\Gamma_{z_0})} & \le \int_{\Gz} \sumo \lc \left| V_\vp u(\lambda) \right| \left| V_{\vpt}\vp(z-\lambda) \right| \la z \ra^{s}\rc^p dz \\
& \lesssim \int_{\Gz} \sumo \lc \la \lambda \ra^r \la z-\lambda \ra^{-N} \la z \ra^s \rc^p dz \\
& \lesssim \int_{\Gz} \sumo \lc \la \lambda \ra^{r-N/2} \la z \ra^{s-N/2} \rc^p dz \\
& \lc \int_{\Gz} \la z \ra^{p(s-N/2)}\rc \lc \sumo \la \lambda \ra^{p(r-N/2)} \rc < \infty,
\end{align*}
where the conclusion follows after choosing $N$ large enough. 

We need to prove now that $z_0 \notin WF_G^{p,s}(u) \Rightarrow z_0 \notin \wt{WF_G^{p,s}(u)}$, that is \eqref{def WFG mod cont} implies \eqref{def WFG mod disc}. We essentially argue as before after inverting the role of discrete and continuous norms and reconstruction formulae. To be concrete we prove that $V_\vp u \in \ell^p_{v_s}(\Lambda \cap \Gz)$. In view of the inversion formula for the STFT in \eqref{rec form cont} we set $u=u_1' + u_2'$, where
\[ u_1' = \int_{\Gz} V_\vp u(z)\pi(z)\vp dz, \quad u_2' = \int_{\rnn\setminus \Gz} V_\vp u(z)\pi(z)\vp dz. \] 
It is enough to prove that $V_\vp u_1', V_\vp u_2' \in \ell^p_{v_s}(\Lambda \cap \Gz)$. Let us first prove the claim for $V_\vp u_1'$, having in mind \eqref{stft rec cont}. We have
\begin{align*}
\lV V_\vp u_1' \rV_{\ell^p_{v_s}(\Lambda \cap \Gz)} & = \sumin \left| V_\vp u_1 (\lambda)\right|^p \la \lambda \ra^{sp} \\
& \lesssim \sumin \lc \int_{\Gz} \left| V_\vp u (z) \right| \left|V_\vp \vp (\lambda-z) \right| dz \rc^p \\
& \lesssim \sumin \lc \int_{\Gz} \left|V_\vp u(z) \right| \la z \ra^s \left|V_\vp \vp(\lambda-z)\right| \la \lambda-z\ra^{-s} dz\rc^p. 
\end{align*} 
We set $f(z) = \left|V_\vp u(z) \right| \la z \ra^s$ and $h(\lambda-z) = \left|V_\vp \vp(\lambda-z)\right| \la \lambda-z\ra^{-s}$ in order to lighten the notation. Therefore, by applying again H\"older's inequality we get
\begin{align*}
\lV V_\vp u_1' \rV_{\ell^p_{v_s}(\Lambda \cap \Gz)} & \lesssim \sumin \lc \int_{\Gz} f(z)h(\lambda-z) dz\rc^p \\
& \le \sumin \lc \int_{\Gz} f(z)^p h(\lambda-z) dz\rc \lc \int_{\Gz} h(z-\lambda) dz\rc^{p/p'}\\
& \le \lV h \rV_{L^1}^{p/p'} \sumin \int_{\Gz} f(z)^p h(\lambda-z) dz \\
& \le C \int_{\Gz} f(z)^p dz < \infty,
\end{align*} 
where we used the assumption \eqref{def WFG mod cont} in the last step and we set 
\[ C= \lV h \rV_{L^1}^{p/p'} \sup_{z \in \rn} \sumin h(z-\lambda) < \infty. \] The proof of $V_\vp u_2' \in \ell^p_{v_s}(\Lambda \cap \Gz)$ follows the same pattern of the proof of $V_\vp u_2 \in L^p_{v_s}(\Gz)$ above, hence is left to the interested reader. 
 \end{proof}

\begin{remark} As a consequence of the previous identification and Proposition \ref{prop indep win WFGM} we have that $\wt{WF_G^{p,s}(u)}$ does not depend on the Gabor frame $\cG(\vp,\Lambda)$ used in \eqref{def WFG mod disc}. Moreover, it is clear from the definition that $u \in M^p_{v_s}(\rn)$ if and only if $WF_G^{p,s}(u) = \emptyset$, in view of the compactness of the sphere $\bS^{2n-1}$. 
\end{remark}

The modulation space Gabor wave front set is very well suited to the study of Weyl operators with low regular symbols, as detailed in the following result. 

\begin{proposition}[{\cite[Prop. 5.3]{CNR prop}}] Let $1\le p \le \infty$, $a \in M^{\infty}_{1\otimes v_\gamma}(\rnn)$ with $\gamma > 2n$ and $0 < 2s < \gamma - 2n$. For any $u \in M^p_{-s}(\rn)$ we have 
	\[ WF^{p,s}_G(a^\w u) \subset WF^{p,s}_G(u). \]
\end{proposition}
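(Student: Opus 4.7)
The plan is to argue along the lines of the proof of Proposition \ref{prop indep win WFGM}, leveraging the fact that Weyl operators with symbols in $M^\infty_{1\otimes v_\gamma}(\rnn)$ admit a pointwise estimate of convolution type on the STFT side. Fix $z_0 \notin WF^{p,s}_G(u)$ and an open conic neighbourhood $\Gamma_{z_0}$ of $z_0$ witnessing this, so that $V_\vp u \in L^p_{v_s}(\Gamma_{z_0})$. I would then choose a smaller open conic neighbourhood $\Gamma'_{z_0}$ of $z_0$ with $\overline{\Gamma'_{z_0}\cap \bS^{2n-1}} \subseteq \Gamma_{z_0}$; the goal becomes $V_\vp(a^\w u) \in L^p_{v_s}(\Gamma'_{z_0})$.

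The technical core is the standard pointwise estimate (see the proof of \cite[Prop.\ 5.3]{CNR prop}) of the form
\[ |V_\vp(a^\w u)(w)| \lesssim \int_{\rnn} H(w - y)\, |V_\vp u(y)|\, dy, \quad w \in \rnn, \]
where $H$ is, up to a linear change of variables inherited from the Wigner transform, a majorant of $|V_\Phi a|$ for a suitable window $\Phi \in \cS(\rnn)\smo$ constructed from $\vp$. The assumption $a \in M^\infty_{1\otimes v_\gamma}(\rnn)$ then supplies the crucial decay $H(\zeta) \lesssim \la \zeta \ra^{-\gamma}$.

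Given this bound I would split the integration domain into $\Gamma_{z_0}$ and $\rnn\setminus \Gamma_{z_0}$. On the former, the subadditivity $\la w \ra^s \le \la w-y\ra^s \la y \ra^s$ reduces the matter to controlling the convolution of $|V_\vp u|\,\la \cdot \ra^s \in L^p(\Gamma_{z_0})$ against $\la \cdot \ra^{s-\gamma}$, which lies in $L^1(\rnn)$ since $\gamma - s > 2n$ (a consequence of $\gamma - 2s > 2n$); Young's inequality yields the desired $L^p$ bound. On the complement, I would invoke the geometric estimate already used in Proposition \ref{prop indep win WFGM}, namely $\la w-y \ra \gtrsim \max\{\la w\ra, \la y\ra\}$ for $w \in \Gamma'_{z_0}$ and $y \notin \Gamma_{z_0}$. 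Splitting $\la w-y\ra^{-\gamma} \lesssim \la w\ra^{-(\gamma-r)}\la y\ra^{-r}$ with a parameter $r$ to be tuned, one balances growth in $y$ against decay in $w$. Since $u \in M^p_{v_{-s}}(\rn)$ amounts to $V_\vp u \in L^p_{v_{-s}}(\rnn)$, H\"older's inequality forces $r - s > 2n/p'$ in order to absorb the integral in $y$, while $L^p$-integrability on $w \in \Gamma'_{z_0}$ requires $\gamma - r - s > 2n/p$; adding these two inequalities gives exactly $\gamma - 2s > 2n$, which is precisely the hypothesis of the proposition.

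The main obstacle is the pointwise convolution-type estimate for $V_\vp(a^\w u)$: it is not entirely elementary, and requires careful bookkeeping with the integral kernel of $V_\vp \circ a^\w$, the intervening metaplectic transformations linking Weyl symbols to Wigner transforms, and the identification of a suitable window $\Phi$ on phase space so that the $M^\infty_{1\otimes v_\gamma}$-norm of $a$ actually controls the resulting convolution kernel. Once this estimate is in hand, the cone-decomposition argument outlined above proceeds in essence exactly as in the proof of Proposition \ref{prop indep win WFGM}.
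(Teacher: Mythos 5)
Your proposal is correct, but note that the paper itself gives no proof of this proposition: it is quoted verbatim from \cite[Prop.\ 5.3]{CNR prop}, consistent with the authors' declared policy of omitting technical proofs. Your argument matches the strategy of that cited source: the pointwise bound $|V_\vp(a^\w u)(w)|\lesssim\int\la w-y\ra^{-\gamma}|V_\vp u(y)|\,dy$ is exactly the almost-diagonalization estimate for Weyl operators with symbols in $M^\infty_{1\otimes v_\gamma}(\rnn)$ (cf.\ \cite{CNT} and the references therein), and your subsequent cone splitting, with the exponent bookkeeping $r-s>2n/p'$ and $\gamma-r-s>2n/p$ summing to the hypothesis $\gamma-2s>2n$, is the same two-region argument used in Proposition \ref{prop indep win WFGM} and closes correctly.
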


This should be compared with Proposition \ref{prop WFG}, having in mind that $\bigcap_{\gamma \ge 0} M^{\infty}_{1\otimes v_\gamma}(\rnn) = S^0_{0,0}$.

\subsection{Propagation of singularities} 
We conclude this survey with some easy examples of application of the Gabor wave front set to propagation of microlocal singularities for Schr\"odinger equations. We refer to \cite{CNR prop,nicola r} for a broader treatment of the topic, see also the other references cited in the historical account above. 

Let us fix the setting of our investigation. We consider the Cauchy problem for the Schr\"odinger equation, namely 
\begin{equation}\label{eq schro}
\begin{cases} i\partial_t u(t,x) = Hu(t,x) \\ u(0,x) = u_0(x) \end{cases}, 
\end{equation}
where $H=a^\w$ is the Weyl quantization of a real-valued quadratic polynomial in $\rnn$, namely
\begin{equation}\label{quad symb} a(x,\xi) = \frac{1}{2}xAx + \xi B x + \frac{1}{2}\xi C \xi, \end{equation} for some symmetric matrices $A,C\in \bR^{n\times n}$ and $B\in \bR^{n\times n}$. The phase-space analysis of the Schr\"odinger propagator $U(t) : u_0(x) \mapsto u(t,x)$ is intimately related to the corresponding Hamiltonian system, that is\footnote{The factor $2\pi$ is a consequence of the normalization of the Fourier transform adopted in this paper.} \[ 2\pi \dot{z} = J \nabla_z a(z) = \mathbb{A}, \quad \mathbb{A}= \left(\begin{array}{cc} B & C \\ -A & -B^{\top}\end{array}\right). \]
The classical phase-space flow $\cA_t=e^{(t/2\pi)\mathbb{A}}: \rnn \to \rnn$ is a symplectic diffeomorphism and the following result on the propagation of singularities holds in our setting. 
\begin{theorem}
	Consider the Cauchy problem \eqref{eq schro} with the assumption specified above. We have that $U(t) \in \cB(M^p_{v_{r}}(\rn))$ for all $t \in \bR$, $1 \le p \le \infty$ and $r\in \bR$. If $u_0 \in \cS'(\rn)$ then
	\[ WF_G(U(t)u_0) = \cA_t (WF_G(u_0)), \quad t \in \bR. \] If in particular $u_0 \in M^p_{v_{-s}}(\rn)$ for some $1\le p \le \infty$ and $s\ge 0$ then 
		\[ WF_G^{p,s}(U(t)u_0) = \cA_t (WF_G^{p,s}(u_0)), \quad  t \in \bR. \]
\end{theorem}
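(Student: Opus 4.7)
The cornerstone of the plan is the classical identification of the Schr\"odinger propagator for a real quadratic Hamiltonian with a metaplectic operator, namely
\[ U(t) = c(t)\,\mu(\cA_t), \qquad t\in\bR, \]
for some unimodular constant $c(t)$ of no consequence here. I would verify this by differentiating $\mu(\cA_t)u_0$ in $t$ and checking it satisfies \eqref{eq schro}, exploiting the infinitesimal generator of the metaplectic representation versus the Weyl quantization of the quadratic symbol \eqref{quad symb}. Once $U(t)$ is realized as a metaplectic operator, every claim in the statement is a corollary of the behaviour of $\mu(S)$ with respect to the STFT.

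The $L^2$-unitarity of $\mu(\cA_t)$ plus the intertwining of time-frequency shifts,
\[ \mu(S)^{-1}\pi(z)\mu(S) = c_{S,z}\, \pi(S^{-1}z),\qquad |c_{S,z}|=1, \]
yields the pointwise transformation law
\[ |V_\vp(\mu(S)u)(z)| = |V_{\mu(S)^{-1}\vp}u(S^{-1}z)|,\qquad z\in\rnn. \]
Since $\mu(S)^{-1}\vp\in\cS(\rn)\smo$ (from the preservation of the Schwartz class by the metaplectic action), the window-independence of the modulation norm (and of $WF_G$, $WF_G^{p,s}$) lets us use this identity freely. To establish boundedness $U(t)\in\cB(M^p_{v_r}(\rn))$, I would apply the change of variables $w=S^{-1}z$; the symplectic condition $|\det S|=1$ preserves Lebesgue measure, and $\la Sw\ra \asymp \la w\ra$ gives equivalence of the weights $v_r\circ S$ and $v_r$, so the $M^p_{v_r}$-norm of $\mu(S)u$ is controlled by that of $u$.

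For the propagation of $WF_G$, I would exploit the symplectic invariance stated earlier, namely $z_0\in WF_G(u)\Rightarrow Sz_0\in WF_G(\mu(S)u)$, applied with $S=\cA_t$. The reverse inclusion comes from the same statement applied to $\mu(\cA_t)^{-1}=c'\mu(\cA_{-t})$, so that $WF_G(U(t)u_0)=\cA_t(WF_G(u_0))$. The modulation-space version requires a quantitative refinement of the same principle: starting from the identity $|V_\vp(U(t)u_0)(z)|=|V_\psi u_0(\cA_t^{-1}z)|$ with $\psi=\mu(\cA_t)^{-1}\vp\in\cS\smo$, the substitution $w=\cA_t^{-1}z$ transforms the defining integral
\[ \int_{\Gamma_{z_0}} |V_\vp(U(t)u_0)(z)|^p \la z\ra^{sp}\,dz = \int_{\cA_t^{-1}\Gamma_{z_0}} |V_\psi u_0(w)|^p \la \cA_t w\ra^{sp}\,dw, \]
and the linear invertibility of $\cA_t$ maps open conic neighbourhoods to open conic neighbourhoods while preserving the polynomial weight up to constants. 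Thus $z_0\notin WF_G^{p,s}(U(t)u_0)$ iff $\cA_t^{-1}z_0\notin WF_G^{p,s}(u_0)$, which is the desired equality.

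The main obstacle is the initial identification $U(t)=c(t)\mu(\cA_t)$: one must justify that the Schr\"odinger evolution exists as a strongly continuous unitary group on $L^2$ (by essential self-adjointness of the Weyl quantization of a real quadratic symbol on, e.g., $\cS(\rn)$), and that its generator coincides with that of the metaplectic one-parameter subgroup associated with $\cA_t$. Once this is in place, all three conclusions follow uniformly from the STFT transformation law, without further case analysis between Schwartz and modulation-space regularity.
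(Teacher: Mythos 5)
Your proposal is correct, and it supplies an argument the paper deliberately omits: the survey states this theorem without proof, deferring to \cite{CNR prop} and the surrounding literature. Your route --- identifying $U(t)$ with a metaplectic operator $c(t)\mu(\cA_t)$ and pushing all three claims through the covariance $|V_\vp(\mu(S)u)(z)| = |V_{\mu(S)^{-1}\vp}u(S^{-1}z)|$, combined with $|\det S|=1$, $\la Sw\ra \asymp \la w\ra$ for invertible linear $S$, and the window-independence results (Proposition \ref{prop WFG} and Proposition \ref{prop indep win WFGM}) --- is the standard and cleanest argument for exactly quadratic Hamiltonians; note that the change of variables in the $M^p_{v_r}$-norm works precisely because $p=q$, so the norm is a genuine $L^p$ norm on $\rnn$ unaffected by the mixing of $x$ and $\xi$ under $\cA_t$. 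Two remarks. First, the cited source \cite{CNR prop} proves a stronger statement (quadratic Hamiltonians perturbed by potentials in $M^\infty_{1\otimes v_\gamma}(\rnn)$) by a genuinely different technique: superpolynomial off-diagonal decay of the Gabor matrix $\la U(t)\pi(z)\vp,\pi(w)\vp\ra$ along the graph of $\cA_t$, i.e.\ an almost-diagonalization of the propagator in a Gabor frame. Your metaplectic shortcut is unavailable in that generality because the perturbed propagator is no longer metaplectic, but it is perfectly adequate for the theorem as stated. Second, the point you flag as the main obstacle --- that the generator of $t\mapsto\mu(e^{(t/2\pi)\mathbb{A}})$ coincides with $-i a^\w$ under the paper's $2\pi$-normalization, together with essential self-adjointness of $a^\w$ on $\cS(\rn)$ --- is classical (see \cite{folland}, Ch.~4, and \cite{dg symp}) and is best cited rather than reproved; also, $c(t)$ need not admit a globally continuous determination because of the metaplectic double cover, but since only $|c(t)|=1$ enters your estimates this is harmless.
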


More refined results for general Hamiltonians and potential perturbations can be found in \cite{CNR prop}. We stress that this is one of the rare case where propagation of singularities for Schr\"odinger operators with non-smooth potentials is taken into account. 

\begin{example}[The free particle] Let us first consider the free case, namely $H= - \triangle/2$ - which corresponds to $a(x,\xi)=\xi^2/2$. It is well known that the solution of \eqref{eq schro} can then be expressed as 
	\[ u(t,x) = (K_t * u_0)(x), \quad K_t(x)=\frac{1}{(2\pi it)^{n/2}}e^{ix^2/(2t)}. \]
An easy computation reveals that the corresponding Hamiltonian flow is given by 
\[ \cA_t(x,\xi) = (x-2\pi t\xi,\xi), \quad (x,\xi)\in \rnn.\]
Let us consider the initial datum $u_0 = \delta_0$, so that $U(t)u_0(x) = K_t(x)$. Therefore, using the results in Example \ref{ex delta} we get
\[ WF_G(U(t)u_0) = \cA_t(WF_G(\delta_0)) = \{ (x,\xi) \in \rnn : x=2\pi t \xi, \xi \ne 0\}.\] Notice that a pure frequency initial state, namely $u_0(x) = e^{2\pi i x\xi_0}$ for $\xi_0 \in \rn$, evolves as 
$u(t,x) = e^{-2\pi i \xi_0^2}e^{2\pi i x \xi_0}$, hence the wave front set is stationary: 
\[ WF_G(U(t)u_0) = WF_G(u_0) = \{(x,0) \in \rnn, x \ne 0\}. \]
\end{example}

\begin{example}[The harmonic oscillator] 
	Consider now the Hamiltonian 
	\[ H = -\frac{1}{4\pi}\triangle + \pi x^2, \] that is the Weyl quantization of the symbol $a(x,\xi)$ as in \eqref{quad symb} with $A=(2\pi) I$, $B=0$ and $C=-(2\pi)I$, where $I$ is the $n\times n$ identity matrix - see \cite[Sec. 4.3]{folland} and \cite[Sec. 4]{CN pot mod} for a detailed derivation. The classical flow can be explicitly computed:
	\[ \cA_t = e^{(t/2\pi)\mathbb{A}} = \lc \begin{array}{cc} (\cos t)I & (\sin t)I \\ -(\sin t) I & (\cos t)I \end{array}\rc, \quad t \in \bR. \]
	Therefore, by taking into account the initial datum $u_0 = 1$ and Example \ref{ex freq} above we have for any $t \in \bR$
	\[ WF_G(U(t)u_0) = \cA_t (WF_G(1)) = \{ (x,\xi)=((\cos t)y,(\sin t)y)\in \rnn, \, y\ne 0\}. \] Let us examine the behaviour of the wave front set in the interval $t\in [0,\pi/2]$ for the sake of concreteness. For $t=0$ we have $WF_G(u_0) = (\rno)\times \{0\}$, while for $t=\pi/2$ we have $WF(U(\pi/2)u_0) = \{0\} \times (\rno)$. We see that for $t \in (0,\pi/2)$ the singularities are propagated by counter-clockwise rotation in phase space. Let us stress the connection with the structure of the propagator, whose distribution kernel is given by the \textit{Mehler formula} \cite{dg symp,kapit}: for $k \in \bZ$,
	\begin{equation}\label{mehler} K_t(x,y) = \begin{cases} c(k) |\sin t|^{-n/2}\exp \left( \pi i \frac{x^2+y^2}{\tan t} - 2\pi i \frac{x y}{\sin t} \right) & (\pi k < t < \pi(k+1)) \\ c'(k)\delta((-1)^k x-y) & (t=k\pi) \end{cases}, \end{equation} for suitable phase factors $c(k),c'(k)\in \bC$. 
	The solution is thus given in the form of Fourier integral operator for $t\ne \pi/2 + k\pi$, $k \in \bZ$, as
	\[ U(t)u_0(x) = (\cos t)^{-n/2} \int_{\rn} e^{2\pi i \left[ \frac{1}{\cos t}x\xi -\frac{\tan t}{2} (x^2+\xi^2)\right]} \widehat{u_0}(\xi)d\xi. \] In particular, the choice $u_0 = 1$ yields  $u(t,x)=U(t)u_0(x) = (\cos t)^{-n/2} e^{-\pi i (\tan t) x^2}$, which is consistent with the previous computation, since by \eqref{stft gauss} we have
	\[ \left|V_\vp u(t,\cdot)(x,\xi)\right| =e^{-\pi (\cos t)^2 |\xi-(\tan t)x|^2}. \]
\end{example}

\end{document}